\newcommand{\leqnomode}{\tagsleft@true}
\newcommand{\reqnomode}{\tagsleft@false}
\numberwithin{equation}{section}
\newtheorem{theorem}{Theorem}[section]
\newtheorem{lemma}[theorem]{Lemma}
\newtheorem{proposition}[theorem]{Proposition}
\newtheorem{remark}[theorem]{Remark}
\renewcommand{\epsilon}{\varepsilon}
\renewcommand{\theta}{{\vartheta}}
\renewcommand{\rightarrow}{\to}
\title[Ground states for a coupled system of Schr\"odinger equations on $\mathbb{R}^{N}$]{Ground states for a linearly coupled system of Schr\"odinger equations on $\mathbb{R}^{N}$}
\author[J.M.\ do \'O]{Jo\~ao Marcos do \'O}
\author[JC. \ de Albuquerque]{Jos\'e Carlos de Albuquerque}
\address[J.M. do \'O]{Department of Mathematics,
	Federal University of Para\'{\i}ba
	\newline\indent
	58051-900, Jo\~ao Pessoa-PB, Brazil}
\email{\href{mailto:jmbo@pq.cnpq.br}{jmbo@pq.cnpq.br}}
\address[J.C. de~Albuquerque]{Institute of Mathematics and Statistics,
	Federal University of Goi\'{a}s
	\newline\indent
	74690-900, Goi\'{a}s-GO, Brazil}
\email{\href{mailto:joserre@gmail.com}{joserre@gmail.com}}
\keywords{Coupled systems; Nonlinear Schr\"odinger equations; Lack of compactness; Ground states}
\begin{document}
	

\begin{abstract}
	We study the following class of linearly coupled Schr\"{o}dinger elliptic systems 
	$$
	\left\{
	\begin{array}{lr}
	-\Delta u+V_{1}(x)u=\mu|u|^{p-2}u+\lambda(x)v, & \quad x\in\mathbb{R}^{N},\\
	-\Delta v+V_{2}(x)v=|v|^{q-2}v+\lambda(x)u,    & x\in\mathbb{R}^{N},
	\end{array}
	\right.
	$$
	where $N\geq3$, $2<p\leq q\leq 2^{*}=2N/(N-2)$ and $\mu\geq0$. We consider nonnegative potentials periodic or asymptotically periodic which are related with the coupling term $\lambda(x)$ by the assumption $|\lambda(x)|\leq\delta\sqrt{V_{1}(x)V_{2}(x)}$, for some $0<\delta<1$. We deal with three cases: Firstly, we study the subcritical case, $2<p\leq q<2^{*}$, and we prove the existence of positive ground state for all parameter $\mu\geq0$. Secondly, we consider the critical case, $2<p<q=2^{*}$, and we prove that there exists $\mu_{0}>0$ such that the coupled system possesses positive ground state solution for all $\mu\geq\mu_{0}$. In these cases, we use a minimization method based on Nehari manifold. Finally, we consider the case $p=q=2^{*}$, and we prove that the coupled system has no positive solutions. For that matter, we use a Pohozaev identity type.
\end{abstract}
	\maketitle
	

\section{Introduction}

\noindent

We are interested in establish existence and nonexistence results for the following class of linearly coupled systems involving nonlinear Schr\"{o}dinger equations 
\begin{equation}\label{paper1j0000}
	\left\{
	\begin{array}{lr}
		-\Delta u+V_{1}(x)u=\mu|u|^{p-2}u+\lambda(x)v, & \quad x\in\mathbb{R}^{N},\\
		-\Delta v+V_{2}(x)v=|v|^{q-2}v+\lambda(x)u,    & x\in\mathbb{R}^{N},
	\end{array}
	\right.
\end{equation}
where $N\geq3$, $2<p\leq q\leq 2^*=2N/(N-2)$ is the critical Sobolev exponent. Our main goal here is to prove the existence of positive ground states for the subcritical case, that is, when $2<p\leq q<2^{*}$ and for the critical case when $2<p<q=2^{*}$. In the critical case, the existence of ground state will be related with the parameter $\mu$ introduced in the first equation. For the critical case when $p=q=2^{*}$, we make use of a Pohozaev type identity to prove that System~\eqref{paper1j0000} does not admit positive solution. We are concerned with two classes of nonnegative potentials: periodic and asymptotically periodic. Before we introduce our assumptions and the main results, we give a brief motivation to study this class of systems.


\subsection{Motivation and related results}
Solutions of System \eqref{paper1j0000} are related with solutions of the following two-component system of nonlinear Schr\"{o}dinger equations
\begin{equation}\label{paper1j00}
	\left\{
	\begin{array}{lr}
		-i\displaystyle\frac{\partial\psi}{\partial t}=\Delta \psi-V_{1}(x)\psi+\mu|\psi|^{p-2}\psi+\lambda(x)\phi, & \quad x\in \mathbb{R}^{N}, \ t\geq0,\\
		-i\displaystyle\frac{\partial\phi}{\partial t}=\Delta \phi-V_{2}(x)\phi+|\phi|^{p-2}\phi+\lambda(x)\psi,    & x\in \mathbb{R}^{N}, \ t\geq0.
	\end{array}
	\right. 
\end{equation}
Such class of systems arise in various branches of mathematical physics and nonlinear optics, see for instance \cite{fisica}. For System \eqref{paper1j00}, a solution of the form
$$
(\psi(t,x),\phi(t,x))=(\exp(-iEt)u(x),\exp(-iEt)v(x)),
$$
where $E$ is some real constant, is called \textit{standing wave solution}. Moreover, $(\psi,\phi)$ is a solution of \eqref{paper1j00} if and only if $(u,v)$ solves the following system
$$
\left\{
\begin{array}{lr}
-\Delta u+(V_{1}(x)-E)u=\mu|u|^{p-2}u+\lambda(x)v, & \quad x\in\mathbb{R}^{N},\\
-\Delta v+(V_{2}(x)-E)v=|v|^{q-2}v+\lambda(x)u,    & x\in\mathbb{R}^{N}.
\end{array}
\right.
$$
For convenience and without loss of generality, it is replaced $V_{i}(x)-E$ by $V_{i}(x)$, that is, it is shifted $E$ to $0$. Thus, it turn to consider the coupled system \eqref{paper1j0000}.

When $\lambda(x)\equiv0$, $V_{1}(x)\equiv V_{2}(x)\equiv V(x)$, $u(x)\equiv v(x)$, $\mu=1$ and $p=q$, System \eqref{paper1j0000} reduces to the scalar equation $-\Delta u+V(x)u=|u|^{p-2}u$, in $\mathbb{R}^{N}$. There are many papers that studied this class of Schr\"{o}dinger equations under many different assumptions on the potential and nonlinearity. The literature is rather extensive, see for instance \cite{1,2,5,7,r1,r2,r3,r4} and references therein.

Our work was inspired by some papers that have appeared in the recent years concerning the study of coupled systems involving nonlinear Schr\"{o}dinger equations by using variational approach. In \cite{czo1}, Z.~Chen and W.~Zou studied the existence of ground states for the following class of critical coupled system with constant potentials
\begin{equation}\label{paper1jj1}
	\left\{
	\begin{array}{lr}
		-\Delta u+\mu u=|u|^{p-2}u+\lambda v,     & \quad x\in\mathbb{R}^{N},\\
		-\Delta v+\nu v=|v|^{2^{*}-2}v+\lambda u, & x\in\mathbb{R}^{N}.
	\end{array}
	\right.
\end{equation}
They proved that there exists critical parameters $\mu_{0}>0$ and $\lambda_{\mu,\nu}\in[\sqrt{(\mu-\mu_{0})\nu},\sqrt{\mu\nu})$ such that \eqref{paper1jj1} has a positive ground state when $\lambda>\lambda_{\mu,\nu}$ and has no ground state solutions when $\mu>\mu_{0}$ and $\lambda<\lambda_{\mu,\nu}$. In \cite{czo2}, the same authors studied a class of coupled systems involving general nonlinearities in the subcritical sense. In \cite{m2}, Z.~Guo and W.~Zou obtained existence of positive ground states for another class of critical coupled systems. For more existence results concerning coupled systems we refer the readers to \cite{acr,pompo,maia,zj,t} and references therein.

Motivated by the above discussion, the current paper is concerned to study the class of coupled systems introduced by \eqref{paper1j0000} in the subcritical and critical sense. This class of systems is characterized by its lack of compactness due to the fact that the equations are defined in whole Euclidean space $\mathbb{R}^{N}$, which roughly speaking, originates from the invariance of $\mathbb{R}^{N}$ with respect to translation and dilation. Furthermore, we have the fact that \eqref{paper1j0000} involves strongly coupled Schr\"{o}dinger elliptic equations because of the linear terms in the right hand side. To overcome these difficulties, we shall use a variational approach based on Nehari manifold in combination with a lemma due to P.L. Lions (see Lemma~\ref{paper1lions}).


\subsection{Assumptions}

Firstly, we deal with the following class of coupled systems
\begin{equation}\label{paper1j0}
	\left\{
	\begin{array}{lr}
		-\Delta u+V_{1,\mathrm{o}}(x)u=\mu|u|^{p-2}u+\lambda_{\mathrm{o}}(x)v, & \quad x\in\mathbb{R}^{N},\\
		-\Delta v+V_{2,\mathrm{o}}(x)v=|v|^{q-2}v+\lambda_{\mathrm{o}}(x)u,    & x\in\mathbb{R}^{N},
	\end{array}
	\right. \tag{$S_{\mathrm{o}}^{\mu}$}
\end{equation}
where $V_{1,\mathrm{o}}(x)$, $V_{2,\mathrm{o}}(x)$ and $\lambda_{\mathrm{o}}(x)$ denote periodic functions. In view of the presence of the potentials we introduce the following space
$$
E_{i,\mathrm{o}}=\left\{u\in H^{1}(\mathbb{R}^{N}):\int_{\mathbb{R}^{N}}V_{i,\mathrm{o}}(x)u^{2}\;\mathrm{d}x<+\infty\right\}, \quad i=1,2,
$$
endowed with the inner product
$$
(u,v)_{E_{i,\mathrm{o}}}=\int_{\mathbb{R}^{N}}\nabla u\nabla v\;\mathrm{d}x+\int_{\mathbb{R}^{N}}V_{i,\mathrm{o}}(x)uv\;\mathrm{d}x,
$$
to which corresponds the induced norm $\|u\|_{E_{i,\mathrm{o}}}^{2}=(u,u)_{E_{i,\mathrm{o}}}$. In order to establish a variational approach to treat System~\eqref{paper1j0}, we need to require suitable assumptions on the potentials. For each $i = 1, 2$, we assume that

\begin{enumerate}[label=($V_{1}$),ref=$(V_{1})$] 
	\item \label{paper1A1}
	$V_{i,\mathrm{o}},\lambda_{\mathrm{o}}\in C(\mathbb{R}^{N})$ are $1$-periodic in each of $x_{1}, x_{2},..., x_{N}$.  
\end{enumerate}

\begin{enumerate}[label=($V_{2}$),ref=$(V_{2})$] 
	\item \label{paper1A2}
	$V_{i,\mathrm{o}}(x)\geq0$ for all $x\in\mathbb{R}^{N}$ and 
	$$
	\nu_{i,\mathrm{o}}=\inf_{u\in E_{i,\mathrm{o}}}\left\{\int_{\mathbb{R}^{N}}|\nabla u|^{2}\;\mathrm{d}x+\int_{\mathbb{R}^{N}}V_{i,\mathrm{o}}(x)u^{2}\;\mathrm{d}x: \int_{\mathbb{R}^{N}}u^{2}\;\mathrm{d}x=1\right\}>0.
	$$			
\end{enumerate}

\begin{enumerate}[label=($V_{3}$),ref=$(V_{3})$] 
	\item \label{paper1A3}
	$|\lambda_{\mathrm{o}}(x)|\leq\delta\sqrt{V_{1,\mathrm{o}}(x)V_{2,\mathrm{o}}(x)}$, for some $\delta\in(0,1)$, for all $x\in\mathbb{R}^{N}$.	 
\end{enumerate}

\begin{enumerate}[label=($V_{3}'$),ref=$(V_{3}')$] 
	\item \label{paper1A9}
	$0<\lambda_{\mathrm{o}}(x)\leq\delta\sqrt{V_{1,\mathrm{o}}(x)V_{2,\mathrm{o}}(x)}$, for some $\delta\in(0,1)$, for all $x\in\mathbb{R}^{N}$.	 
\end{enumerate}

We set the product space $E_{\mathrm{o}}=E_{1,\mathrm{o}}\times E_{2,\mathrm{o}}$. We have that $E_{\mathrm{o}}$ is a Hilbert space when endowed with the inner product
$$
((u,v),(w,z))_{E_{\mathrm{o}}}=(u,w)_{E_{1,\mathrm{o}}}+(v,z)_{E_{2,\mathrm{o}}},
$$
to which corresponds the induced norm 
$$
\|(u,v)\|_{E_{\mathrm{o}}}^{2}=((u,v),(u,v))_{E_{\mathrm{o}}}=\|u\|_{E_{1,\mathrm{o}}}^{2}+\|v\|_{E_{2,\mathrm{o}}}^{2}.
$$
Associated to System~\eqref{paper1j0} we have the functional $I_{\mu,\mathrm{o}}:E_{\mathrm{o}}\rightarrow\mathbb{R}$ defined by
$$
I_{\mu,\mathrm{o}}(u,v)=\frac{1}{2}\left(\|(u,v)\|_{E_{\mathrm{o}}}^{2}-2\int_{\mathbb{R}^{N}}\lambda_{\mathrm{o}}(x)uv\; \mathrm{d}x\right)-\frac{\mu}{p}\|u\|_{p}^{p}-\frac{1}{q}\|v\|_{q}^{q}.
$$
Using our assumptions we can check that $I_{\mu,\mathrm{o}}$ is well defined and is of class $C^{2}$ with derivative given by
\begin{align*}
	\langle  I_{\mu,\mathrm{o}}^{\prime}(u,v),(\phi,\psi)\rangle=((u,v),(\phi,\psi))_{E_{\mathrm{o}}}-\int_{\mathbb{R}^{N}}\left(\mu|u|^{p-2}u\phi+|v|^{q-2}v\psi+\lambda_{\mathrm{o}}(x)\left(u\psi+v\phi\right)\right)\; \mathrm{d}x,
\end{align*}
where $(\phi,\psi)\in C^{\infty}_{0}(\mathbb{R}^{N})\times C^{\infty}_{0}(\mathbb{R}^{N})$. Thus critical points of $I_{\mu,\mathrm{o}}$ correspond to weak solutions of \eqref{paper1j0} and conversely. 

We say that a solution $(u_{0},v_{0})\in E_{\mathrm{o}}$ for System \eqref{paper1j0} is a ground state (or least energy) solution if $(u_{0},v_{0})\neq(0,0)$ and its energy is minimal among the energy of all nontrivial solutions, that is, $ I_{\mu,\mathrm{o}}(u_{0},v_{0})\leq I_{\mu,\mathrm{o}}(u,v)$ for any other solution $(u,v)\in E_{\mathrm{o}}\backslash\{(0,0)\}$. We say that $(u_{0},v_{0})$ is nonnegative (nonpositive) if $u_{0},v_{0}\geq0$ ($u_{0},v_{0}\leq0$) and positive (negative) if $u_{0},v_{0}>0$ ($u_{0},v_{0}<0$) respectively.

We are also concerned with the existence of ground states for the following class of coupled systems	 
\begin{equation}\label{paper1jap}
	\left\{
	\begin{array}{lr}
		-\Delta u+V_{1}(x)u=\mu|u|^{p-2}u+\lambda(x)v, & \quad x\in\mathbb{R}^{N},\\
		-\Delta v+V_{2}(x)v=|v|^{q-2}v+\lambda(x)u, & x\in\mathbb{R}^{N},
	\end{array}
	\right. \tag{$S^{\mu}$}
\end{equation} 
when the potentials $V_{1}(x),$ $V_{2}(x)$ and $\lambda(x)$ are asymptotically periodic at infinity, that is, they are infinity limit of periodic functions $V_{1,\mathrm{o}}(x)$, $V_{2,\mathrm{o}}(x)$ and $\lambda_{\mathrm{o}}(x)$. In analogous way, we may define the suitable product space $E=E_{1}\times E_{2}$ considering the asymptotically periodic potential $V_{i}(x)$ instead $V_{i,\mathrm{o}}(x)$. In order to give a variational approach for our problem, for $i=1,2$ we assume the following hypotheses:

\begin{enumerate}[label=($V_{4}$),ref=$(V_{4})$] 
	\item \label{paper1A6}
	$V_{i},\lambda\in C(\mathbb{R}^{N})$, $V_{i}(x)<V_{i,\mathrm{o}}(x)$, $\lambda_{\mathrm{o}}(x)<\lambda(x)$, for all $x\in \mathbb{R}^{N}$ and
	$$
	\lim_{|x|\rightarrow+\infty}|V_{i,\mathrm{o}}(x)-V_{i}(x)|=0 \quad \mbox{and} \quad \lim_{|x|\rightarrow+\infty}|\lambda(x)-\lambda_{\mathrm{o}}(x)|=0.
	$$
\end{enumerate}

\begin{enumerate}[label=($V_{5}$),ref=$(V_{5})$] 
	\item \label{paper1A7}
	$V_{i}(x)\geq0$ for all $x\in\mathbb{R}^{N}$ and 
	$$
	\nu_{i}=\inf_{u\in E_{i}}\left\{\int_{\mathbb{R}^{N}}|\nabla u|^{2}\;\mathrm{d}x+\int_{\mathbb{R}^{N}}V_{i}(x)u^{2}\;\mathrm{d}x: \int_{\mathbb{R}^{N}}u^{2}\;\mathrm{d}x=1\right\}>0.
	$$		 
\end{enumerate}

\begin{enumerate}[label=($V_{6}$),ref=$(V_{6})$] 
	\item \label{paper1A8}
	$|\lambda(x)|\leq\delta\sqrt{V_{1}(x)V_{2}(x)}$, for some $\delta\in(0,1)$, for all $x\in\mathbb{R}^{N}$.
\end{enumerate}

\begin{enumerate}[label=($V_{6}'$),ref=$(V_{6}')$] 
	\item \label{paper1A10}
	$0<\lambda(x)\leq\delta\sqrt{V_{1}(x)V_{2}(x)}$, for some $\delta\in(0,1)$, for all $x\in\mathbb{R}^{N}$.
\end{enumerate}


\subsection{Statement of the main results}    

The main results of the paper are the following:

\begin{theorem}\label{paper1A}
	Assume that \ref{paper1A1}-\ref{paper1A3} hold. If $2<p\leq q<2^{*}$, then System~\eqref{paper1j0} possesses a nonnegative ground state solution $(u_{0},v_{0})\in C^{1,\beta}_{loc}(\mathbb{R}^{N})\times C^{1,\beta}_{loc}(\mathbb{R}^{N})$ for some $\beta\in(0,1)$, for all $\mu\geq0$. If \ref{paper1A9} holds, then the ground state is positive.
\end{theorem}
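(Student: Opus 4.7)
The plan is to minimize $I_{\mu,\mathrm{o}}$ on its Nehari manifold, using the Lions-type Lemma~\ref{paper1lions} together with the $\mathbb{Z}^{N}$-periodicity of the coefficients to restore compactness; standard elliptic regularity and a truncation/maximum-principle argument will then yield the sign conclusions.

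First, I would observe that assumption~\ref{paper1A3} together with a weighted Cauchy--Schwarz inequality gives
$$
2\Big|\int_{\mathbb{R}^{N}}\lambda_{\mathrm{o}}(x)\,uv\;\mathrm{d}x\Big| \leq \delta\int_{\mathbb{R}^{N}}\bigl(V_{1,\mathrm{o}}(x)u^{2}+V_{2,\mathrm{o}}(x)v^{2}\bigr)\;\mathrm{d}x \leq \delta\,\|(u,v)\|_{E_{\mathrm{o}}}^{2},
$$
so the quadratic part of $I_{\mu,\mathrm{o}}$ is equivalent to $\|(u,v)\|_{E_{\mathrm{o}}}^{2}$. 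Setting
$$
\mathcal{N}_{\mu,\mathrm{o}}=\bigl\{(u,v)\in E_{\mathrm{o}}\setminus\{(0,0)\}:\langle I_{\mu,\mathrm{o}}'(u,v),(u,v)\rangle=0\bigr\},\qquad c_{\mu,\mathrm{o}}:=\inf_{\mathcal{N}_{\mu,\mathrm{o}}}I_{\mu,\mathrm{o}},
$$
the subcritical condition $p,q>2$ forces the fibering map $t\mapsto I_{\mu,\mathrm{o}}(tu,tv)$ to have, for every nonzero pair with $\|u\|_{p}+\|v\|_{q}>0$, a unique positive critical point which is a strict maximum; this yields a well-defined radial projection onto $\mathcal{N}_{\mu,\mathrm{o}}$. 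A Sobolev estimate applied to the Nehari identity then shows $\inf_{\mathcal{N}_{\mu,\mathrm{o}}}\|(u,v)\|_{E_{\mathrm{o}}}>0$, hence $c_{\mu,\mathrm{o}}>0$.

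Next, I would produce a Palais--Smale sequence $\{(u_n,v_n)\}\subset\mathcal{N}_{\mu,\mathrm{o}}$ at level $c_{\mu,\mathrm{o}}$ via Ekeland's principle, using that $\mathcal{N}_{\mu,\mathrm{o}}$ is a natural constraint. The identity
$$
I_{\mu,\mathrm{o}}(u,v)\big|_{\mathcal{N}_{\mu,\mathrm{o}}}=\mu\Bigl(\tfrac{1}{2}-\tfrac{1}{p}\Bigr)\|u\|_{p}^{p}+\Bigl(\tfrac{1}{2}-\tfrac{1}{q}\Bigr)\|v\|_{q}^{q}
$$
combined with the Nehari equation bounds $\|(u_n,v_n)\|_{E_{\mathrm{o}}}$. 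The main obstacle is the loss of compactness arising from translation invariance of $\mathbb{R}^{N}$: I expect this to be the delicate step. I would rule out vanishing via Lions' Lemma~\ref{paper1lions}: if $(u_n,v_n)$ vanished in the Lions sense, then $u_n\to 0$ in $L^{p}$ and $v_n\to 0$ in $L^{q}$ since $2<p\leq q<2^{*}$, and the representation above would give $c_{\mu,\mathrm{o}}=0$, a contradiction. Hence there exist $R,\eta>0$ and $y_n\in\mathbb{Z}^{N}$ (after enlarging $R$ if necessary) with $\int_{B_{R}(y_n)}(|u_n|^{2}+|v_n|^{2})\;\mathrm{d}x\geq \eta$. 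The $\mathbb{Z}^{N}$-periodicity in~\ref{paper1A1} implies that the translates $\widetilde{u}_n:=u_n(\cdot+y_n)$, $\widetilde{v}_n:=v_n(\cdot+y_n)$ form another Palais--Smale sequence at the same level; extract a weakly convergent subsequence $(\widetilde{u}_n,\widetilde{v}_n)\rightharpoonup (u_0,v_0)$ in $E_{\mathrm{o}}$, with $(u_0,v_0)\neq (0,0)$ by the non-vanishing and Rellich on $B_{R}$. Passing to the limit in the Euler--Lagrange identity tested against $(\phi,\psi)\in C_{0}^{\infty}(\mathbb{R}^{N})\times C_{0}^{\infty}(\mathbb{R}^{N})$ yields $I_{\mu,\mathrm{o}}'(u_0,v_0)=0$, so $(u_0,v_0)\in\mathcal{N}_{\mu,\mathrm{o}}$ and $I_{\mu,\mathrm{o}}(u_0,v_0)\geq c_{\mu,\mathrm{o}}$; Fatou's lemma applied to the $L^{p}$ and $L^{q}$ norms in the Nehari representation produces the reverse inequality, whence $I_{\mu,\mathrm{o}}(u_0,v_0)=c_{\mu,\mathrm{o}}$.

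To finish, $C^{1,\beta}_{\mathrm{loc}}$ regularity will follow from a standard bootstrap: Brezis--Kato/Moser iteration (enabled by the subcritical growth $q<2^{*}$) pushes $(u_0,v_0)$ into $L^{\infty}_{\mathrm{loc}}$, after which Schauder--Calder\'{o}n--Zygmund estimates deliver the conclusion. For the sign, I would rerun the whole scheme with the nonlinearities $|u|^{p-2}u,\,|v|^{q-2}v$ replaced by the truncations $(u^{+})^{p-1},\,(v^{+})^{q-1}$; testing the associated Euler--Lagrange system against $-u_{0}^{-}$ and $-v_{0}^{-}$ and summing, the same weighted Cauchy--Schwarz bound underlying~\ref{paper1A3} then produces $u_{0}^{-}=v_{0}^{-}=0$, so $(u_0,v_0)$ is nonnegative. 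Under the strengthened hypothesis~\ref{paper1A9} the coupling $\lambda_{\mathrm{o}}$ is strictly positive, so the strong maximum principle applied to each of the two scalar equations upgrades nonnegativity to $u_0,v_0>0$.
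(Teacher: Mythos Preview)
Your outline for existence and regularity matches the paper's argument closely: Ekeland on the Nehari manifold, boundedness of the Palais--Smale sequence, Lions' lemma to rule out vanishing, $\mathbb{Z}^{N}$-translation via~\ref{paper1A1}, weak limit plus Fatou to recover the level, and a subcritical bootstrap. The paper packages this as Propositions~\ref{paper1p3}--\ref{paper1p9} and carries out the bootstrap by an explicit $W^{2,r}$ Newton-potential iteration where you invoke Brezis--Kato/Moser, but the content is the same.

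The nonnegativity step, however, diverges from the paper and contains a real gap. In your truncation scheme only the power terms are modified; the linear couplings $\lambda_{\mathrm{o}}(x)v$ and $\lambda_{\mathrm{o}}(x)u$ remain. Testing the truncated Euler--Lagrange system against $(u_0^{-},v_0^{-})$ and summing gives
\[
\|(u_0^{-},v_0^{-})\|_{E_{\mathrm{o}}}^{2}-2\int_{\mathbb{R}^{N}}\lambda_{\mathrm{o}}(x)\,u_0^{-}v_0^{-}\,\mathrm{d}x
=-\int_{\mathbb{R}^{N}}\lambda_{\mathrm{o}}(x)\bigl(u_0^{+}v_0^{-}+v_0^{+}u_0^{-}\bigr)\,\mathrm{d}x.
\]
The left side is $\geq(1-\delta)\|(u_0^{-},v_0^{-})\|_{E_{\mathrm{o}}}^{2}$, but the right side mixes the \emph{positive} part of one component with the \emph{negative} part of the other. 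Under~\ref{paper1A3} alone $\lambda_{\mathrm{o}}$ may change sign, so this term has no sign, and bounding it via $|\lambda_{\mathrm{o}}|\leq\delta\sqrt{V_{1,\mathrm{o}}V_{2,\mathrm{o}}}$ produces quantities like $\int V_{2,\mathrm{o}}(v_0^{+})^{2}$ that are not controlled by $\|(u_0^{-},v_0^{-})\|_{E_{\mathrm{o}}}^{2}$; hence $u_0^{-}=v_0^{-}=0$ does not follow. (The step would go through if $\lambda_{\mathrm{o}}>0$, i.e.\ under~\ref{paper1A9}, but that is the strengthened hypothesis used only for strict positivity.) A second point left unaddressed is that, even granting nonnegativity, you must check that the truncated Nehari level equals $c_{\mathcal{N}_{\mu,\mathrm{o}}}$, so that your critical point is a ground state of the \emph{original} system~\eqref{paper1j0} and not merely of the modified one.

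The paper sidesteps both issues by a direct device: having already obtained a ground state $(u_0,v_0)$ of~\eqref{paper1j0}, it takes $(|u_0|,|v_0|)$, projects it onto $\mathcal{N}_{\mu,\mathrm{o}}$ via Lemma~\ref{paper1p1}, and uses $I_{\mu,\mathrm{o}}(t_{\mu}|u_0|,t_{\mu}|v_0|)\leq I_{\mu,\mathrm{o}}(t_{\mu}u_0,t_{\mu}v_0)\leq c_{\mathcal{N}_{\mu,\mathrm{o}}}$ to conclude that the projected pair is itself a nonnegative ground state of the same functional. No auxiliary problem is introduced and the ground-state property is immediate.
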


\begin{theorem}\label{paper1B}
	Assume that \ref{paper1A1}-\ref{paper1A3} hold. If $2<p<q=2^{*}$, then there exists $\mu_{0}>0$ such that System~\eqref{paper1j0} possesses a nonnegative ground state solution $(u_{0},v_{0})\in E_{\mathrm{o}}$, for all $\mu\geq\mu_{0}$. If \ref{paper1A9} holds, then the ground state is positive.
\end{theorem}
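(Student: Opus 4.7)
The plan is to minimize $I_{\mu,\mathrm{o}}$ over the Nehari manifold
\[
\mathcal{N}_{\mu,\mathrm{o}}=\{(u,v)\in E_{\mathrm{o}}\setminus\{(0,0)\}:\langle I'_{\mu,\mathrm{o}}(u,v),(u,v)\rangle=0\},
\]
set $c_{\mu,\mathrm{o}}:=\inf_{\mathcal{N}_{\mu,\mathrm{o}}}I_{\mu,\mathrm{o}}$, and recover compactness by forcing $c_{\mu,\mathrm{o}}$ strictly below the critical threshold $\tfrac{1}{N}S^{N/2}$, where $S$ is the best Sobolev constant for $L^{2^{*}}(\mathbb{R}^{N})$. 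Thanks to \ref{paper1A3}, the pointwise bound $2|\lambda_{\mathrm{o}}uv|\le\delta(V_{1,\mathrm{o}}u^{2}+V_{2,\mathrm{o}}v^{2})$ makes $\|(u,v)\|_{E_{\mathrm{o}}}^{2}-2\int\lambda_{\mathrm{o}}uv$ equivalent to $\|(u,v)\|_{E_{\mathrm{o}}}^{2}$, so every nontrivial ray meets $\mathcal{N}_{\mu,\mathrm{o}}$ in a unique fiber maximum $t(u,v)(u,v)$, the manifold is bounded away from the origin, and on it the functional simplifies to
\[
I_{\mu,\mathrm{o}}(u,v)=\mu\!\left(\tfrac{1}{2}-\tfrac{1}{p}\right)\!\|u\|_{p}^{p}+\tfrac{1}{N}\|v\|_{2^{*}}^{2^{*}}\ge 0.
\]
A standard Ekeland-type argument on $\mathcal{N}_{\mu,\mathrm{o}}$ then produces a bounded Palais--Smale sequence $(u_{n},v_{n})$ for $I_{\mu,\mathrm{o}}$ at level $c_{\mu,\mathrm{o}}$.

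The main obstacle, absent in the subcritical Theorem~\ref{paper1A}, is the loss of compactness at the scale of a bubble in the $v$-component. The decisive step is therefore to exhibit $\mu_{0}>0$ such that
\[
c_{\mu,\mathrm{o}}<\frac{1}{N}S^{N/2}\qquad\text{whenever }\mu\ge\mu_{0}.
\]
For this I would fix a nonnegative $\phi\in C_{0}^{\infty}(\mathbb{R}^{N})$, use truncated Aubin--Talenti bubbles $U_{\varepsilon}$ concentrating at a suitable point, and evaluate $I_{\mu,\mathrm{o}}$ at the Nehari projection $t_{\mu,\varepsilon}(\phi,U_{\varepsilon})\in\mathcal{N}_{\mu,\mathrm{o}}$. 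The Nehari identity $t_{\mu,\varepsilon}^{2}A_{\varepsilon}=\mu t_{\mu,\varepsilon}^{p}B+t_{\mu,\varepsilon}^{2^{*}}C_{\varepsilon}$ forces $t_{\mu,\varepsilon}=O(\mu^{-1/(p-2)})$ as $\mu\to\infty$, so both contributions $\mu\|t_{\mu,\varepsilon}\phi\|_{p}^{p}$ and $\|t_{\mu,\varepsilon}U_{\varepsilon}\|_{2^{*}}^{2^{*}}$ vanish with $\mu$, which together with the standard Talenti expansions drops the whole level strictly below $\tfrac{1}{N}S^{N/2}$ for all sufficiently large $\mu$, in the spirit of the critical threshold estimates in \cite{czo1,m2}.

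Granted the strict inequality, I recover compactness by the usual dichotomy. Periodicity from \ref{paper1A1} lets me translate by $y_{n}\in\mathbb{Z}^{N}$ selected via Lemma~\ref{paper1lions}, ruling out vanishing and producing a nonzero weak limit $(u_{0},v_{0})$ of a translated subsequence, which is automatically a critical point of $I_{\mu,\mathrm{o}}$. A Brezis--Lieb splitting applied to $\|v_{n}\|_{2^{*}}^{2^{*}}$ writes the residual energy in the form $\tfrac{1}{N}\|v_{n}-v_{0}\|_{2^{*}}^{2^{*}}+o(1)$; if this residual were nonzero, the concentration-compactness principle together with the Sobolev inequality would force at least $\tfrac{1}{N}S^{N/2}$ of additional energy to escape, contradicting $c_{\mu,\mathrm{o}}<\tfrac{1}{N}S^{N/2}$. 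Hence convergence is strong in $E_{\mathrm{o}}$, and $(u_{0},v_{0})\in\mathcal{N}_{\mu,\mathrm{o}}$ realizes $c_{\mu,\mathrm{o}}$ and solves \eqref{paper1j0} weakly.

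Finally, replacing $(u_{0},v_{0})$ by $(|u_{0}|,|v_{0}|)$ and re-projecting onto $\mathcal{N}_{\mu,\mathrm{o}}$ does not raise the energy, since $\int\lambda_{\mathrm{o}}uv\le\int|\lambda_{\mathrm{o}}||u||v|$ and the remaining terms are even in each component, so a nonnegative ground state is obtained. Under \ref{paper1A9} the coupling $\lambda_{\mathrm{o}}>0$ lets me apply the strong maximum principle to each equation separately: a zero of $u_{0}$ would, through the nonnegative source $\lambda_{\mathrm{o}}v_{0}\ge 0$, force $v_{0}\equiv 0$ at that point, and cascading back through the other equation would give $(u_{0},v_{0})\equiv 0$, contradicting $c_{\mu,\mathrm{o}}>0$; hence both components are strictly positive.
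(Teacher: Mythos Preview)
Your overall architecture---Nehari minimization, push the level below $\tfrac{1}{N}S^{N/2}$ for large $\mu$, use periodicity plus Lions to defeat vanishing---is exactly the paper's. Two places where you do more work than the paper does are worth flagging.

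First, for the threshold estimate you bring in truncated Aubin--Talenti bubbles and speak of ``standard Talenti expansions''. None of that is needed: the paper simply fixes \emph{any} pair $(u,v)\in E_{\mathrm{o}}$ with $u,v\ge 0$, $u,v\not\equiv 0$, observes from the Nehari identity that $t_{\mu}\to 0$ as $\mu\to\infty$, and concludes $c_{\mathcal{N}_{\mu,\mathrm{o}}}\le I_{\mu,\mathrm{o}}(t_{\mu}u,t_{\mu}v)\le \tfrac{t_{\mu}^{2}}{2}\bigl(\|(u,v)\|_{E_{\mathrm{o}}}^{2}-2\int\lambda_{\mathrm{o}}uv\bigr)\to 0$. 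Your $t_{\mu,\varepsilon}=O(\mu^{-1/(p-2)})$ already gives this; the bubble machinery and the Talenti asymptotics contribute nothing, since the goal is only $c_{\mu,\mathrm{o}}\to 0$, not a sharp comparison with $\tfrac{1}{N}S^{N/2}$ at fixed $\mu$.

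Second, after translating to a nonzero weak limit you invoke Br\'ezis--Lieb and concentration--compactness to get \emph{strong} convergence in $E_{\mathrm{o}}$. The paper never proves strong convergence: once $(u_{0},v_{0})\neq(0,0)$ is a critical point, it writes $c_{\mathcal{N}_{\mu,\mathrm{o}}}+o_{n}(1)=\bigl(\tfrac12-\tfrac1p\bigr)\mu\|u_{n}\|_{p}^{p}+\tfrac1N\|v_{n}\|_{2^{*}}^{2^{*}}$ and uses weak lower semicontinuity of $\|\cdot\|_{p}^{p}$ and $\|\cdot\|_{2^{*}}^{2^{*}}$ (Fatou) to get $c_{\mathcal{N}_{\mu,\mathrm{o}}}\ge I_{\mu,\mathrm{o}}(u_{0},v_{0})$, hence equality. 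Your route is correct but heavier. Conversely, the step you pass over quickly---why vanishing is impossible---is exactly where the paper spends its effort: if $\sup_{y}\int_{B_{R}(y)}(u_{n}^{2}+v_{n}^{2})\to 0$, Lions only gives $u_{n}\to 0$ in $L^{p}$ (not $v_{n}\to 0$ in $L^{2^{*}}$), and one must then combine $\|v_{n}\|_{2^{*}}^{2^{*}}=Nc_{\mathcal{N}_{\mu,\mathrm{o}}}+o_{n}(1)$ with the Sobolev inequality and the Nehari identity to force $c_{\mathcal{N}_{\mu,\mathrm{o}}}\ge\tfrac1N S^{N/2}$, contradicting the threshold bound. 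Make sure this is explicit rather than folded into the phrase ``ruling out vanishing''.
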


\begin{theorem}\label{paper1AB}
	Suppose that assumptions \ref{paper1A1}-\ref{paper1A8} hold. If $2<p\leq q<2^{*}$, then System~\eqref{paper1jap} possesses a nonnegative ground state solution $(u_{0},v_{0})\in C^{1,\beta}_{loc}(\mathbb{R}^{N})\times C^{1,\beta}_{loc}(\mathbb{R}^{N})$ for some $\beta\in(0,1)$, for all $\mu\geq0$. Moreover, if $2<p<q=2^{*}$, then there exists $\mu_{0}>0$ such that System~\eqref{paper1jap} possesses a nonnegative ground state solution for all $\mu\geq\mu_{0}$. If \ref{paper1A10} holds, then the ground states are positive.
\end{theorem}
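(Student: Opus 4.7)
The plan is to run the Nehari-manifold minimization from Theorems \ref{paper1A}-\ref{paper1B} in the asymptotically periodic setting and to exploit the strict comparison in \ref{paper1A6} to restore compactness. First I would set up the functional $I_{\mu}\colon E\to\mathbb{R}$ associated to \eqref{paper1jap} in perfect analogy with $I_{\mu,\mathrm{o}}$, together with the Nehari manifold
$$
\mathcal{N}=\{(u,v)\in E\setminus\{(0,0)\}:\langle I'_{\mu}(u,v),(u,v)\rangle=0\}.
$$
By \ref{paper1A8} (with $\delta<1$) one has $\|(u,v)\|_{E}^{2}-2\int_{\mathbb{R}^{N}}\lambda(x)uv\,\mathrm{d}x\geq(1-\delta)\|(u,v)\|_{E}^{2}$, so standard arguments show that $I_{\mu}$ has mountain-pass geometry, that $\mathcal{N}$ is a $C^{1}$ natural constraint, and that each nontrivial $(u,v)\in E$ admits a unique projection onto $\mathcal{N}$ via a scaling $t(u,v)>0$. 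The level $c=\inf_{\mathcal{N}}I_{\mu}$ is therefore well defined and positive, and coincides with the mountain-pass level.

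The decisive step is the strict inequality $c<c_{\mathrm{o}}$, where $c_{\mathrm{o}}$ is the ground-state level of the periodic system \eqref{paper1j0} supplied by Theorem \ref{paper1A} (or Theorem \ref{paper1B} in the critical case). Let $(u_{\mathrm{o}},v_{\mathrm{o}})$ be such a ground state, positive under \ref{paper1A10} and replaced by $(|u_{\mathrm{o}}|,|v_{\mathrm{o}}|)$ otherwise. Since $V_{i}<V_{i,\mathrm{o}}$ and $\lambda_{\mathrm{o}}<\lambda$ pointwise on $\mathbb{R}^{N}$, direct comparison gives $I_{\mu}(tu_{\mathrm{o}},tv_{\mathrm{o}})<I_{\mu,\mathrm{o}}(tu_{\mathrm{o}},tv_{\mathrm{o}})$ for every $t>0$, whence
$$
c\;\leq\;\max_{t>0}I_{\mu}(tu_{\mathrm{o}},tv_{\mathrm{o}})\;<\;\max_{t>0}I_{\mu,\mathrm{o}}(tu_{\mathrm{o}},tv_{\mathrm{o}})\;=\;I_{\mu,\mathrm{o}}(u_{\mathrm{o}},v_{\mathrm{o}})\;=\;c_{\mathrm{o}}.
$$

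Next I would take a minimizing sequence $(u_{n},v_{n})\subset\mathcal{N}$ for $c$. Coercivity of the quadratic part yields boundedness in $E$; after extraction, $(u_{n},v_{n})\rightharpoonup(u_{0},v_{0})$ with $(u_{0},v_{0})$ a critical point of $I_{\mu}$. If $(u_{0},v_{0})=(0,0)$, Lemma~\ref{paper1lions} combined with $I_{\mu}(u_{n},v_{n})\to c>0$ prevents vanishing, so after translations $y_{n}\in\mathbb{Z}^{N}$ with $|y_{n}|\to\infty$ the sequence is non-vanishing. Using \ref{paper1A6} to replace $V_{i},\lambda$ by their periodic limits $V_{i,\mathrm{o}},\lambda_{\mathrm{o}}$ in the limit problem and the $\mathbb{Z}^{N}$-invariance of the periodic functional, the translated weak limit would be a nontrivial solution of \eqref{paper1j0} at level $\leq c<c_{\mathrm{o}}$, contradicting the definition of $c_{\mathrm{o}}$. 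Hence $(u_{0},v_{0})\neq(0,0)$, and weak lower semicontinuity together with the Nehari identity force $I_{\mu}(u_{0},v_{0})=c$. Maximum principle arguments, Moser iteration and Schauder estimates then yield positivity under \ref{paper1A10} and the $C^{1,\beta}_{\mathrm{loc}}$ regularity in the subcritical range.

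In the critical range $2<p<q=2^{*}$ the main obstacle is concentration onto Aubin-Talenti bubbles, and I need $c<\tfrac{1}{N}S^{N/2}$, where $S$ denotes the best Sobolev constant. The threshold $\mu_{0}$ is inherited from Theorem \ref{paper1B}: testing $I_{\mu,\mathrm{o}}$ against the standard concentrating family gives $c_{\mathrm{o}}<\tfrac{1}{N}S^{N/2}$ for $\mu\geq\mu_{0}$, and combined with $c<c_{\mathrm{o}}$ this yields $c<\tfrac{1}{N}S^{N/2}$. A Brezis-Lieb / second concentration-compactness decomposition then rules out bubble concentration, and the remainder of the argument applies verbatim. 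The hardest single step is thus the fine bubble estimate fixing $\mu_{0}$; once it is available from the periodic problem, the passage to the asymptotically periodic case costs only the strict comparison $c<c_{\mathrm{o}}$ established above.
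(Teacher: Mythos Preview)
Your strategy coincides with the paper's: set up $I_\mu$ and $\mathcal{N}_\mu$, prove the strict inequality $c_{\mathcal{N}_\mu}<c_{\mathcal{N}_{\mu,\mathrm{o}}}$ from \ref{paper1A6} and the periodic ground state (exactly Lemma~\ref{paper1est}), take a bounded Palais--Smale sequence, and use $c_{\mathcal{N}_\mu}<c_{\mathcal{N}_{\mu,\mathrm{o}}}$ to force a nontrivial weak limit; the critical threshold $\mu_0$ is indeed inherited from the periodic problem via $c_{\mathcal{N}_\mu}<c_{\mathcal{N}_{\mu,\mathrm{o}}}<\tfrac{1}{N}S^{N/2}$.

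The implementations of the contradiction step differ. You translate by $y_n\in\mathbb{Z}^N$ with $|y_n|\to\infty$ and argue that the translated weak limit is a nontrivial critical point of $I_{\mu,\mathrm{o}}$ at level $\leq c_{\mathcal{N}_\mu}<c_{\mathcal{N}_{\mu,\mathrm{o}}}$; this is the standard splitting argument and is correct, but it obliges you to check that the translated sequence is Palais--Smale for $I_{\mu,\mathrm{o}}$ (so you must start from an Ekeland sequence, not just a minimizing one, and use \ref{paper1A6} plus $u_n,v_n\to0$ in $L^2_{\mathrm{loc}}$ to pass from $I'_\mu$ to $I'_{\mu,\mathrm{o}}$). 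The paper instead observes that $I_{\mu,\mathrm{o}}(u_n,v_n)=c_{\mathcal{N}_\mu}+o_n(1)$ and $\langle I'_{\mu,\mathrm{o}}(u_n,v_n),(u_n,v_n)\rangle=o_n(1)$, then projects $(u_n,v_n)$ onto $\mathcal{N}_{\mu,\mathrm{o}}$ via $t_n>0$ and shows $t_n\to1$ through two short claims (translations are used only to guarantee nontriviality, not to build a periodic solution), yielding $c_{\mathcal{N}_{\mu,\mathrm{o}}}\leq I_{\mu,\mathrm{o}}(t_nu_n,t_nv_n)=c_{\mathcal{N}_\mu}+o_n(1)$ directly. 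Both routes cost about the same. One small inaccuracy: the paper obtains $c_{\mathcal{N}_{\mu,\mathrm{o}}}<\tfrac{1}{N}S^{N/2}$ in Lemma~\ref{paper1mu} not by testing against Aubin--Talenti profiles but simply by noting that the Nehari parameter $t_\mu\to0$ as $\mu\to\infty$.
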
 

\begin{theorem}\label{paper1C}
	Suppose that $p=q=2^{*}$ and \ref{paper1A8} holds. In addition, for $i=1,2$ we consider the following assumptions:
	\begin{enumerate}[label=($V_7$),ref=$(V_7)$] 
		\item
		$V_{i}\in C^{1}(\mathbb{R}^{N})$ is nonnegative and $0\leq\langle\nabla V_{i}(x),x\rangle\leq CV_{i}(x)$.
		\label{paper1A4}
	\end{enumerate}
	
	\begin{enumerate}[label=($V_8$),ref=$(V_8)$] 
		\item
		$\lambda\in C^{1}(\mathbb{R}^{N})$, $|\langle\nabla\lambda(x),x\rangle|\leq C|\lambda(x)|$ and $\langle\nabla\lambda(x),x\rangle\leq0$.
		\label{paper1A5}
	\end{enumerate}
	Then, System~\eqref{paper1jap} has no positive classical solution for all $\mu\geq0$.
\end{theorem}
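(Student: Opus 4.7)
The plan is to derive a Pohozaev-type identity for the coupled system, pair it with the standard energy identity coming from testing with $(u,v)$, and exploit the critical-exponent relation $N/2^{*}=(N-2)/2$ to eliminate both the gradient and the nonlinearity contributions. What survives is a weighted quadratic obstruction that assumptions \ref{paper1A8}, \ref{paper1A4}, and \ref{paper1A5} force to vanish.

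First I would test the first (resp. second) equation of \eqref{paper1jap} against $x\cdot\nabla u$ (resp. $x\cdot\nabla v$) and integrate over $\mathbb{R}^{N}$. The usual integration-by-parts computations give $\int(-\Delta u)(x\cdot\nabla u)=-\tfrac{N-2}{2}\int|\nabla u|^{2}$, $\int V_{1}u\,(x\cdot\nabla u)=-\tfrac{N}{2}\int V_{1}u^{2}-\tfrac{1}{2}\int\langle x,\nabla V_{1}\rangle u^{2}$, and $\mu\int|u|^{2^{*}-2}u\,(x\cdot\nabla u)=-\tfrac{N-2}{2}\mu\int|u|^{2^{*}}$. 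Upon summing the two scalar identities, the linear coupling combines symmetrically through $\int\lambda[v(x\cdot\nabla u)+u(x\cdot\nabla v)]=\int\lambda\,x\cdot\nabla(uv)=-N\int\lambda uv-\int\langle x,\nabla\lambda\rangle uv$, producing the system Pohozaev identity $(P)$. Pairing $(P)$ with the energy identity $(E)$ obtained by testing against $(u,v)$, I form $(P)-\tfrac{N-2}{2}(E)$: the gradient terms cancel automatically, and precisely because $p=q=2^{*}$ forces $N/2^{*}=(N-2)/2$, the critical-nonlinearity terms cancel as well. What remains (with all integrals over $\mathbb{R}^{N}$) is
\begin{equation*}
\int(V_{1}u^{2}+V_{2}v^{2})\,\ud x+\tfrac{1}{2}\int(\langle x,\nabla V_{1}\rangle u^{2}+\langle x,\nabla V_{2}\rangle v^{2})\,\ud x=2\int\lambda uv\,\ud x+\int\langle x,\nabla\lambda\rangle uv\,\ud x.
\end{equation*}

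Assume now that $(u,v)$ is a positive classical solution. By \ref{paper1A4}, the second LHS term is nonnegative, so $\mathrm{LHS}\ge\int(V_{1}u^{2}+V_{2}v^{2})\,\ud x$. By \ref{paper1A5} and $uv>0$, $\int\langle x,\nabla\lambda\rangle uv\,\ud x\le 0$, whence $\mathrm{RHS}\le 2\int\lambda uv\,\ud x$; combining \ref{paper1A8} with the elementary inequality $2\sqrt{V_{1}V_{2}}\,uv\le V_{1}u^{2}+V_{2}v^{2}$ yields $2\int\lambda uv\,\ud x\le\delta\int(V_{1}u^{2}+V_{2}v^{2})\,\ud x$. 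Chaining with the identity above gives $(1-\delta)\int(V_{1}u^{2}+V_{2}v^{2})\,\ud x\le 0$; since $\delta<1$, this forces $V_{1}u^{2}\equiv V_{2}v^{2}\equiv 0$. Because $u,v>0$ everywhere, $V_{1}\equiv V_{2}\equiv 0$, and then $\lambda\equiv 0$ by \ref{paper1A8}; the system reduces to two decoupled critical scalar equations, which have no positive solutions in the functional framework $E$, yielding the desired contradiction.

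The main technical obstacle is the rigorous verification of the Pohozaev computation on $\mathbb{R}^{N}$: one must discard the boundary-at-infinity terms, which requires sufficient decay of $u,v$ and their gradients. This follows from standard elliptic regularity applied to the classical solution together with the membership $(u,v)\in E$ and the coercivity afforded by \ref{paper1A8}, which controls $V_{i}u_{i}^{2}$ at infinity. Once $(P)$ is in hand, the rest is a short algebraic manipulation, and the strict inequality $\delta<1$ provides the quantitative gap that makes the contradiction immediate.
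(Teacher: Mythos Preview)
Your approach is essentially the paper's: derive a Pohozaev identity, pair it with the energy identity obtained by testing against $(u,v)$, and exploit $N/2^{*}=(N-2)/2$ to kill the gradient and nonlinearity contributions. The displayed relation you obtain is exactly what the paper gets after combining its Lemma~\ref{paper1poho} with \eqref{paper1p14} and multiplying by $-(N-2)/2$.

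The only divergence is in the final contradiction. The paper, from the same identity, first deduces $\int(V_{1}u^{2}+V_{2}v^{2}-2\lambda uv)\,\ud x=0$ (one inequality from \ref{paper1A4}--\ref{paper1A5}, the other from \ref{paper1A8}) and then closes with the self-contained chain
\[
0\le\int\bigl(\sqrt{V_{1}}\,u-\sqrt{V_{2}}\,v\bigr)^{2}\,\ud x\le\int\Bigl(V_{1}u^{2}+V_{2}v^{2}-\tfrac{2}{\delta}\lambda uv\Bigr)\,\ud x<\int\bigl(V_{1}u^{2}+V_{2}v^{2}-2\lambda uv\bigr)\,\ud x=0,
\]
obtaining $0<0$ directly without ever leaving the coupled problem. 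Your endgame instead pushes on to $V_{1}\equiv V_{2}\equiv\lambda\equiv 0$ and then invokes nonexistence of positive solutions in $E$ for the decoupled equations $-\Delta w=c\,w^{2^{*}-1}$ on $\mathbb{R}^{N}$. That last claim is true, but it is not free: the scalar critical equation \emph{does} have positive entire solutions (the Aubin--Talenti bubbles), and what you are really using is that none of them lie in $L^{2}(\mathbb{R}^{N})$, which in turn rests on the Caffarelli--Gidas--Spruck classification. You should cite this rather than assert it. The paper's route is shorter and avoids importing that machinery; your route, on the other hand, transparently covers the degenerate situation $V_{1}\equiv V_{2}\equiv 0$.
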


\begin{remark}
	A typical example of functions satisfying \ref{paper1A4} and \ref{paper1A5} is $\lambda(x)=-(1/4)\|x\|^{2}$ and $V_{i}(x)=(1/2)\|x\|^{2}$.
\end{remark}


\subsection{Notation}
Let us introduce the following notation:	
\begin{itemize}
	\item $C$, $\tilde{C}$, $C_{1}$, $C_{2}$,... denote positive constants (possibly different).
	\item $B_{R}(x_{0})$ denotes the open ball centered at $x_{0}$ and radius $R>0$.
	\item The norm in $L^{p}(\mathbb{R}^{N})$ and $L^{\infty}(\mathbb{R}^{N})$, will be denoted respectively by $\|\cdot\|_{p}$ and $\|\cdot\|_{\infty}$.
	\item $o_{n}(1)$ denotes a sequence which converges to $0$ as $n\rightarrow\infty$.
\end{itemize}


\subsection{Outline}
In the forthcoming section we introduce and give some properties of the Nehari manifold associated to \eqref{paper1j0}. In Section \ref{paper1s3}, we deal with System~\eqref{paper1j0} with subcritical growth: $2<p\leq q<2^{*}$. For this matter we use a minimization method based on Nehari manifold to get a positive ground state solution and a bootstrap argument to obtain regularity. In Section~\ref{paper1s5}, we study System~\eqref{paper1j0} with critical growth, precisely: $2<p<q=2^{*}$. In the periodic case, the key point is to use the invariance of the energy functional under translations to recover the compactness of the minimizing sequence. In Section~\ref{paper1s6}, we study the existence of ground states when the potentials are asymptotically periodic. For this purpose, we establish a relation between the energy levels associated to Systems~\eqref{paper1j0} and \eqref{paper1jap}. In Section \ref{paper1s4}, we make use of Pohozaev type identity to prove the nonexistence of positive classical solutions for System~\eqref{paper1jap} in the critical case, $p=q=2^{*}$.


\section{Preliminary results}\label{paper1s1}

In this section we provide preliminary results used throughout the paper.

\begin{lemma}\label{paper1lemma1}
	If \ref{paper1A3} holds, then we have
	\begin{equation}\label{paper1j27}
		\|(u,v)\|_{E_{\mathrm{o}}}^{2}-2\int_{\mathbb{R}^{N}}\lambda_{\mathrm{o}}(x)uv\;\mathrm{d}x\geq(1-\delta)\|(u,v)\|_{E_{\mathrm{o}}}^{2}, \quad \mbox{for all} \hspace{0,2cm} (u,v)\in E_{\mathrm{o}}.
	\end{equation} 
\end{lemma}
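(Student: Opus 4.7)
The plan is to bound the cross term $2\int_{\mathbb{R}^N}\lambda_{\mathrm{o}}(x)uv\,\mathrm{d}x$ pointwise by $\delta$ times a quantity that, after integration and Young's inequality, turns out to be dominated by $\|(u,v)\|_{E_{\mathrm{o}}}^2$. The assumption \ref{paper1A3} is tailored precisely for this: the factor $\sqrt{V_{1,\mathrm{o}}(x)V_{2,\mathrm{o}}(x)}$ splits naturally so that one half gets absorbed into the $\int V_{1,\mathrm{o}} u^2$ term and the other into $\int V_{2,\mathrm{o}} v^2$.

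Concretely, I would first use \ref{paper1A3} to write
\[
|2\lambda_{\mathrm{o}}(x)uv|\leq 2\delta\sqrt{V_{1,\mathrm{o}}(x)}\,|u|\cdot\sqrt{V_{2,\mathrm{o}}(x)}\,|v|
\]
pointwise on $\mathbb{R}^N$. Then I would apply the elementary inequality $2ab\leq a^2+b^2$ with $a=\sqrt{V_{1,\mathrm{o}}(x)}\,|u|$ and $b=\sqrt{V_{2,\mathrm{o}}(x)}\,|v|$, yielding
\[
|2\lambda_{\mathrm{o}}(x)uv|\leq \delta\bigl(V_{1,\mathrm{o}}(x)u^{2}+V_{2,\mathrm{o}}(x)v^{2}\bigr).
\]
Integrating this over $\mathbb{R}^N$ (which is legitimate because $u\in E_{1,\mathrm{o}}$ and $v\in E_{2,\mathrm{o}}$, so both $\int V_{i,\mathrm{o}}u^{2}$ and $\int V_{i,\mathrm{o}}v^{2}$ are finite, in particular showing that the $\lambda_{\mathrm{o}}uv$ integral is absolutely convergent) gives
\[
\left|2\int_{\mathbb{R}^N}\lambda_{\mathrm{o}}(x)uv\,\mathrm{d}x\right|\leq \delta\int_{\mathbb{R}^N}\bigl(V_{1,\mathrm{o}}(x)u^{2}+V_{2,\mathrm{o}}(x)v^{2}\bigr)\,\mathrm{d}x\leq \delta\|(u,v)\|_{E_{\mathrm{o}}}^{2},
\]
the last step because the right-hand side is obtained from $\|(u,v)\|_{E_{\mathrm{o}}}^{2}=\int|\nabla u|^{2}+\int V_{1,\mathrm{o}}u^{2}+\int|\nabla v|^{2}+\int V_{2,\mathrm{o}}v^{2}$ by dropping the two nonnegative gradient terms.

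Subtracting from $\|(u,v)\|_{E_{\mathrm{o}}}^{2}$ yields the desired inequality \eqref{paper1j27}. There is essentially no obstacle here; the only mild subtlety is handling the sign of $\lambda_{\mathrm{o}}$, which is why one uses the absolute value in the pointwise bound and why \ref{paper1A3} is stated with $|\lambda_{\mathrm{o}}|$ (as opposed to the stronger positivity condition \ref{paper1A9}). The same proof will apply verbatim to the asymptotically periodic setting using \ref{paper1A8} in place of \ref{paper1A3}, which will be useful later in the paper.
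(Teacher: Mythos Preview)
Your proof is correct and essentially identical to the paper's: both use \ref{paper1A3} pointwise, then the inequality $2ab\leq a^{2}+b^{2}$ (the paper writes it as $(\sqrt{V_{1,\mathrm{o}}}|u|-\sqrt{V_{2,\mathrm{o}}}|v|)^{2}\geq 0$), and finally drop the gradient terms to bound by $\delta\|(u,v)\|_{E_{\mathrm{o}}}^{2}$.
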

\begin{proof}
	For $(u,v)\in E_{\mathrm{o}}$, we have
	$$
	0\leq\left(\sqrt{V_{1,\mathrm{o}}(x)}|u|-\sqrt{V_{2,\mathrm{o}}(x)}|v|\right)^{2}=V_{1,\mathrm{o}}(x)u^{2}-2\sqrt{V_{1,\mathrm{o}}(x)}|u|\sqrt{V_{2,\mathrm{o}}(x)}|v|+V_{2,\mathrm{o}}(x)v^{2},
	$$
	which together with assumption \ref{paper1A3} implies that
	\begin{eqnarray*}
		-2\int_{\mathbb{R}^{N}}\lambda_{\mathrm{o}}(x)uv\;\mathrm{d}x & \geq & -2\delta\int_{\mathbb{R}^{N}}\sqrt{V_{1,\mathrm{o}}(x)}|u|\sqrt{V_{2,\mathrm{o}}(x)}|v|\;\mathrm{d}x\\
		& \geq & -\delta\left(\int_{\mathbb{R}^{N}}V_{1,\mathrm{o}}(x)u^{2}\;\mathrm{d}x+\int_{\mathbb{R}^{N}}V_{2,\mathrm{o}}(x)v^{2}\;\mathrm{d}x\right)\\
		& \geq & -\delta\|(u,v)\|_{E_{\mathrm{o}}}^{2},
	\end{eqnarray*}	 
	which easily implies that \eqref{paper1j27} holds.
\end{proof}


In order to prove the existence of ground states, we introduce the Nehari manifold associated to System~\eqref{paper1j0}
$$
\mathcal{N}_{\mu,\mathrm{o}}=\left\{(u,v)\in E_{\mathrm{o}}\backslash\{(0,0)\}:\langle I_{\mu,\mathrm{o}}^{\prime}(u,v),(u,v)\rangle=0\right\}.
$$
Notice that if $(u,v)\in\mathcal{N}_{\mu,\mathrm{o}}$, then
\begin{equation}\label{paper1j8}
	\|(u,v)\|_{E_{\mathrm{o}}}^{2}-2\int_{\mathbb{R}^{N}}\lambda_{\mathrm{o}}(x)uv\; \mathrm{d}x=\mu\|u\|_{p}^{p}+\|v\|_{q}^{q}.
\end{equation}

\begin{lemma}\label{paper1nehari}
	There exists $\alpha>0$ such that
	\begin{equation}\label{paper1j26}
		\|(u,v)\|_{E_{\mathrm{o}}}\geq\alpha, \quad \mbox{for all} \hspace{0,2cm} \ (u,v)\in\mathcal{N}_{\mu,\mathrm{o}}.
	\end{equation}
	Moreover, $\mathcal{N}_{\mu,\mathrm{o}}$ is a $C^{1}$-manifold.
\end{lemma}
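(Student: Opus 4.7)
The statement splits into two claims: a uniform positive lower bound on the norms of elements of $\mathcal{N}_{\mu,\mathrm{o}}$, and the $C^1$-manifold structure. Both rely on rewriting the Nehari identity \eqref{paper1j8} together with Lemma \ref{paper1lemma1}, and will follow from standard considerations once we observe that the norms on $E_{i,\mathrm{o}}$ control the $L^r$-norms for $r\in[2,2^{*}]$.

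The plan for the lower bound is as follows. First, I would record that since $V_{i,\mathrm{o}}\ge 0$, one has $\|u\|_{E_{i,\mathrm{o}}}^2\ge\|\nabla u\|_2^2$, while \ref{paper1A2} gives $\|u\|_{E_{i,\mathrm{o}}}^2\ge\nu_{i,\mathrm{o}}\|u\|_2^2$. Averaging these two estimates yields the continuous embedding $E_{i,\mathrm{o}}\hookrightarrow H^{1}(\mathbb{R}^{N})$, and the usual Sobolev embedding then gives $\|u\|_{r}\le C_r\|u\|_{E_{i,\mathrm{o}}}$ for every $r\in[2,2^{*}]$. Now take $(u,v)\in\mathcal{N}_{\mu,\mathrm{o}}$. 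Combining \eqref{paper1j8} with Lemma \ref{paper1lemma1}, the Sobolev embedding above, and the fact that $\|u\|_{E_{1,\mathrm{o}}},\|v\|_{E_{2,\mathrm{o}}}\le\|(u,v)\|_{E_{\mathrm{o}}}$, I obtain
\begin{equation*}
(1-\delta)\|(u,v)\|_{E_{\mathrm{o}}}^{2}\le \mu\|u\|_{p}^{p}+\|v\|_{q}^{q}\le C_{1}\mu\|(u,v)\|_{E_{\mathrm{o}}}^{p}+C_{2}\|(u,v)\|_{E_{\mathrm{o}}}^{q}.
\end{equation*}
Setting $t=\|(u,v)\|_{E_{\mathrm{o}}}>0$ (nonzero because $(u,v)\ne(0,0)$ in the embedded space) and dividing by $t^{2}$, I get $1-\delta\le C_{1}\mu\,t^{p-2}+C_{2}\,t^{q-2}$. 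Since $p,q>2$, the right-hand side tends to $0$ as $t\to 0^{+}$, so $t$ is bounded below by a positive constant $\alpha=\alpha(\mu,\delta,C_{1},C_{2},p,q)$, proving \eqref{paper1j26}.

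For the $C^1$-manifold assertion, set $G(u,v):=\langle I_{\mu,\mathrm{o}}^{\prime}(u,v),(u,v)\rangle$, so that $\mathcal{N}_{\mu,\mathrm{o}}=G^{-1}(0)\setminus\{(0,0)\}$. Since $I_{\mu,\mathrm{o}}\in C^{2}$, the map $G$ is $C^{1}$, and it suffices to show $G^{\prime}(u,v)\ne 0$ for every $(u,v)\in\mathcal{N}_{\mu,\mathrm{o}}$, which will follow if $\langle G^{\prime}(u,v),(u,v)\rangle\ne 0$. A direct computation gives
\begin{equation*}
\langle G^{\prime}(u,v),(u,v)\rangle=2\Big(\|(u,v)\|_{E_{\mathrm{o}}}^{2}-2\!\int_{\mathbb{R}^{N}}\!\lambda_{\mathrm{o}}(x)uv\,\mathrm{d}x\Big)-p\mu\|u\|_{p}^{p}-q\|v\|_{q}^{q},
\end{equation*}
and substituting the Nehari identity \eqref{paper1j8} yields
\begin{equation*}
\langle G^{\prime}(u,v),(u,v)\rangle=(2-p)\mu\|u\|_{p}^{p}+(2-q)\|v\|_{q}^{q}.
\end{equation*}
Both coefficients are strictly negative by the assumption $2<p\le q$, so the expression is $\le 0$ and vanishes only if $\mu\|u\|_{p}^{p}=0$ and $\|v\|_{q}^{q}=0$. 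I would then rule out this degenerate case: if $v\equiv 0$, the Nehari identity reduces to $\|u\|_{E_{1,\mathrm{o}}}^{2}=\mu\|u\|_{p}^{p}$, which combined with $\mu\|u\|_{p}^{p}=0$ forces $u\equiv 0$, contradicting $(u,v)\in\mathcal{N}_{\mu,\mathrm{o}}$; and if $u\equiv 0$ (or $\mu=0$), the identity reduces to $\|v\|_{E_{2,\mathrm{o}}}^{2}=\|v\|_{q}^{q}=0$, again a contradiction. Hence $\langle G^{\prime}(u,v),(u,v)\rangle<0$ on $\mathcal{N}_{\mu,\mathrm{o}}$, so $G^{\prime}\ne 0$ there and $\mathcal{N}_{\mu,\mathrm{o}}$ is a $C^{1}$-manifold by the implicit function theorem.

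\textbf{Expected obstacle.} There is no deep difficulty; the argument is routine once the continuous embedding $E_{i,\mathrm{o}}\hookrightarrow L^r(\mathbb{R}^N)$ is in place. The only point requiring mild care is the case analysis in the second part, to ensure strict negativity of $\langle G^{\prime}(u,v),(u,v)\rangle$ even when $u\equiv 0$, $v\equiv 0$, or $\mu=0$; this is handled by feeding each degenerate case back into the Nehari identity \eqref{paper1j8}.
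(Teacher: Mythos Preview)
Your proof is correct and the first part matches the paper's argument essentially verbatim. In the second part you and the paper both compute $\langle G'(u,v),(u,v)\rangle$ and substitute the Nehari identity, but you substitute it in the opposite direction: the paper eliminates $\mu\|u\|_p^p$ to obtain
\[
\langle J_{\mu,\mathrm{o}}'(u,v),(u,v)\rangle=(2-p)\Big(\|(u,v)\|_{E_{\mathrm{o}}}^{2}-2\!\int_{\mathbb{R}^{N}}\!\lambda_{\mathrm{o}}(x)uv\,\mathrm{d}x\Big)+(p-q)\|v\|_{q}^{q},
\]
and then applies Lemma~\ref{paper1lemma1} together with the lower bound \eqref{paper1j26} just established to conclude $\le (2-p)(1-\delta)\alpha<0$ in one stroke. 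This bypasses precisely the degenerate-case analysis you flagged as the expected obstacle (including the $\mu=0$ case), at the cost of making the second part depend on the first. Your version is self-contained but slightly longer; the paper's is cleaner but uses the earlier conclusion. Both are valid.
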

\begin{proof}
	Let $(u,v)\in\mathcal{N}_{\mu,\mathrm{o}}$. By using \eqref{paper1j27}, \eqref{paper1j8} and Sobolev embedding, we deduce that
	$$
	(1-\delta)\|(u,v)\|_{E_{\mathrm{o}}}^{2} \leq  \|(u,v)\|_{E_{\mathrm{o}}}^{2}-2\int_{\mathbb{R}^{N}}\lambda_{\mathrm{o}}(x)uv\; \mathrm{d}x \leq  C\left(\|(u,v)\|_{E_{\mathrm{o}}}^{p}+\|(u,v)\|_{E_{\mathrm{o}}}^{q}\right).
	$$
	Hence, we have that
	$$
	0<\frac{1-\delta}{C}\leq\|(u,v)\|_{E_{\mathrm{o}}}^{p-2}+\|(u,v)\|_{E_{\mathrm{o}}}^{q-2},
	$$
	which implies that \eqref{paper1j26} holds. Now, let $ J_{\mu,\mathrm{o}}:E_{\mathrm{o}}\backslash\{(0,0)\}\rightarrow\mathbb{R}$ be the $C^{1}$-functional defined by
	$$
	J_{\mu,\mathrm{o}}(u,v)=\langle I_{\mu,\mathrm{o}}^{\prime}(u,v),(u,v)\rangle=\|(u,v)\|_{E_{\mathrm{o}}}^{2}-2\int_{\mathbb{R}^{N}}\lambda_{\mathrm{o}}(x)uv\; \mathrm{d}x-\mu\|u\|_{p}^{p}-\|v\|_{q}^{q}.
	$$
	Notice that $\mathcal{N}_{\mu,\mathrm{o}}= J_{\mu,\mathrm{o}}^{-1}(0)$. If $(u,v)\in\mathcal{N}_{\mu,\mathrm{o}}$, then it follows from \eqref{paper1j8} that
	\begin{eqnarray*}
		\langle J_{\mu,\mathrm{o}}^{\prime}(u,v),(u,v)\rangle & = & 2\left(\|(u,v)\|^{2}_{E_{\mathrm{o}}}-2\int_{\mathbb{R}^{N}}\lambda_{\mathrm{o}}(x)uv\;\mathrm{d}x\right)-\mu p\|u\|_{p}^{p}-q\|v\|_{q}^{q}\\
		& = & (2-p)\left(\|(u,v)\|_{E_{\mathrm{o}}}^{2}-2\int_{\mathbb{R}^{N}}\lambda_{\mathrm{o}}(x)uv\; \mathrm{d}x\right)+(p-q)\|v\|^{q}_{q},
	\end{eqnarray*}
	which together with \eqref{paper1j27}, \eqref{paper1j26} and the fact that $2<p\leq q$ implies that
	\begin{eqnarray}\label{paper1j41}
		\langle J_{\mu,\mathrm{o}}^{\prime}(u,v),(u,v)\rangle\leq (2-p)(1-\delta)\|(u,v)\|_{E_{\mathrm{o}}}^{2}\leq (2-p)(1-\delta)\alpha<0.
	\end{eqnarray}
	Thus, $0$ is a regular value of $ J_{\mu,\mathrm{o}}$ and therefore $\mathcal{N}_{\mu,\mathrm{o}}$ is a $C^{1}$-manifold.
\end{proof}

\begin{remark}
	If $(u_{0},v_{0})\in\mathcal{N}_{\mu,\mathrm{o}}$ is a critical point of $I_{\mu,\mathrm{o}}\mid_{\mathcal{N}_{\mu,\mathrm{o}}}$, then $I_{\mu,\mathrm{o}}^{\prime}(u_{0},v_{0})=0$. In fact, notice that
	$
	I_{\mu,\mathrm{o}}^{\prime}(u_{0},v_{0})=\eta J_{\mu,\mathrm{o}}^{\prime}(u_{0},v_{0}),
	$
	where $\eta\in\mathbb{R}$ is the corresponding Lagrange multiplier. Taking the scalar product with $(u_{0},v_{0})$ and using \eqref{paper1j41} we conclude that $\eta=0$.
\end{remark}

\begin{lemma}\label{paper1p1}
	Assume \ref{paper1A3} holds. Thus, for any $(u,v)\in E_{\mathrm{o}}\backslash\{(0,0)\}$, there exists a unique $t_{\mu}>0$, depending on $\mu$ and $(u,v)$, such that 
	$$
	(t_{\mu}u,t_{\mu}v)\in\mathcal{N}_{\mu,\mathrm{o}} \quad \mbox{and} \quad I_{\mu,\mathrm{o}}(t_{\mu}u,t_{\mu}v)=\max_{t\geq0} I_{\mu,\mathrm{o}}(tu,tv).
	$$
\end{lemma}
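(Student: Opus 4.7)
The plan is to use the standard Nehari fibering argument: fix $(u,v) \in E_{\mathrm{o}} \setminus \{(0,0)\}$ and study the fiber map $h:[0,\infty) \to \mathbb R$ defined by $h(t) = I_{\mu,\mathrm{o}}(tu,tv)$. Setting
$$
A := \|(u,v)\|_{E_{\mathrm{o}}}^{2} - 2\int_{\mathbb R^{N}} \lambda_{\mathrm{o}}(x)\,uv\;\mathrm{d}x,
$$
a direct computation gives
$$
h(t) = \tfrac{t^{2}}{2}A - \tfrac{\mu t^{p}}{p}\|u\|_{p}^{p} - \tfrac{t^{q}}{q}\|v\|_{q}^{q},
\qquad
h'(t) = t\bigl[A - \mu t^{p-2}\|u\|_{p}^{p} - t^{q-2}\|v\|_{q}^{q}\bigr],
$$
and by checking
$\langle I_{\mu,\mathrm{o}}'(tu,tv),(tu,tv)\rangle = t\,h'(t)$ one sees that $(tu,tv) \in \mathcal N_{\mu,\mathrm{o}}$ if and only if $t > 0$ and $h'(t)=0$.

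The first key step is to observe that, thanks to Lemma \ref{paper1lemma1}, one has $A \geq (1-\delta)\|(u,v)\|_{E_{\mathrm{o}}}^{2} > 0$, so the quadratic part of $h$ is strictly positive. Hence $h(0)=0$, $h(t) > 0$ for small $t>0$, and because $p,q > 2$ the nonlinear terms dominate as $t \to \infty$, giving $h(t) \to -\infty$. In particular $h$ attains a positive maximum on $(0,\infty)$ at some interior critical point.

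The second step is uniqueness, which follows by rewriting $h'(t) = 0$ for $t>0$ as the scalar equation $g(t) := \mu t^{p-2}\|u\|_{p}^{p} + t^{q-2}\|v\|_{q}^{q} = A$. Since $(u,v) \neq (0,0)$ forces $\mu\|u\|_{p}^{p} + \|v\|_{q}^{q} > 0$ (in the subcritical/critical regime of interest, and assuming $v\not\equiv 0$ when $\mu = 0$), the function $g$ is continuous, strictly increasing on $(0,\infty)$ with $g(0^{+})=0$ and $g(t)\to +\infty$, so the equation $g(t)=A$ has exactly one root $t_{\mu} \in (0,\infty)$. This $t_{\mu}$ is the unique positive critical point of $h$; combined with the sign analysis $h'>0$ on $(0,t_{\mu})$ and $h'<0$ on $(t_{\mu},\infty)$ coming from monotonicity of $g$, it is a strict global maximum on $[0,\infty)$.

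No step is really a serious obstacle — the argument is routine Nehari bookkeeping — but the only substantive point is the positivity of the quadratic part $A$, which is precisely what Lemma \ref{paper1lemma1} provides and which crucially depends on assumption \ref{paper1A3}; without it, the fiber map need not have a maximum at a positive $t$, and the Nehari manifold could fail to intersect the ray $\{(tu,tv):t>0\}$. The superquadratic exponents $p,q>2$ are what guarantee both $h(t)\to -\infty$ and the strict monotonicity of $g$, so no further ingredient is required.
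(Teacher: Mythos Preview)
Your proposal is correct and follows essentially the same fibering argument as the paper: define the fiber map along the ray, use assumption \ref{paper1A3} (via Lemma~\ref{paper1lemma1}) to get positivity of the quadratic part, and deduce existence of a positive maximum from the superquadratic exponents. The only cosmetic difference is in the uniqueness step: the paper subtracts the two critical-point relations to force $u=v=0$, whereas you observe directly that $t\mapsto \mu t^{p-2}\|u\|_{p}^{p}+t^{q-2}\|v\|_{q}^{q}$ is strictly increasing; these are equivalent formulations of the same fact, and your parenthetical remark about the case $\mu=0$, $v\equiv 0$ is a fair caveat that the paper's statement also implicitly requires.
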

\begin{proof}
	Let $(u,v)\in E_{\mathrm{o}}\backslash\{(0,0)\}$ be fixed and consider the function $g:[0,\infty)\rightarrow\mathbb{R}$ defined by $g(t)= I_{\mu,\mathrm{o}}(tu,tv)$. Notice that
	$
	\langle I_{\mu,\mathrm{o}}^{\prime}(tu,tv),(tu,tv)\rangle=tg^{\prime}(t).
	$
	Therefore, $t_{\mu}$ is a positive critical point of $g$ if and only if $(t_{\mu}u,t_{\mu}v)\in\mathcal{N}_{\mu,\mathrm{o}}$. It follows from assumption \ref{paper1A3} that
	$$
	\|(u,v)\|_{E_{\mathrm{o}}}^{2}-2\int_{\mathbb{R}^{N}}\lambda_{\mathrm{o}}(x)uv\; \mathrm{d}x\geq0, \quad \mbox{for all} \hspace{0,2cm} (u,v)\in E_{\mathrm{o}}.
	$$
	Since $2<p\leq q$ and
	$$
	g(t)=\frac{t^{2}}{2}\left(\|(u,v)\|_{E_{\mathrm{o}}}^{2}-2\int_{\mathbb{R}^{N}}\lambda_{\mathrm{o}}(x)uv\; \mathrm{d}x\right)-\frac{t^{p}}{p}\mu\|u\|^{p}_{p}-\frac{t^{q}}{q}\|v\|_{q}^{q},
	$$
	we conclude that $g(t)<0$ for $t>0$ sufficiently large. On the other hand, by using \ref{paper1A3} and Sobolev embeddings, we have that
	\begin{eqnarray*}
		g(t) & \geq & (1-\delta)\frac{t^{2}}{2}\|(u,v)\|_{E_{\mathrm{o}}}^{2}-C_{1}\mu\frac{t^{p}}{p}\|u\|_{E_{1,\mathrm{o}}}^{p}-C_{2}\frac{t^{q}}{q}\|v\|_{E_{2,\mathrm{o}}}^{q}\\
		& \geq & t^{2}\|(u,v)\|_{E_{\mathrm{o}}}^{2}\left(\frac{1-\delta}{2}-C_{1}\mu\frac{t^{p-2}}{p}\|(u,v)\|_{E_{\mathrm{o}}}^{p-2}-C_{2}\frac{t^{q-2}}{q}\|(u,v)\|_{E_{\mathrm{o}}}^{q-2}\right)>0,
	\end{eqnarray*}
	provided $t>0$ is sufficiently small. Thus $g$ has maximum points in $(0,\infty)$. Suppose that there exists $t_{1},t_{2}>0$ with $t_{1}<t_{2}$ such that $g^{\prime}(t_{1})=g^{\prime}(t_{2})=0$. Since every critical point of $g$ satisfies
	\begin{equation}\label{jm1}
		\|(u,v)\|_{E_{\mathrm{o}}}^{2}-2\int_{\mathbb{R}^{N}}\lambda_{\mathrm{o}}(x)uv\; \mathrm{d}x=t^{p-2}\mu\|u\|_{p}^{p}+t^{q-2}\|v\|^{q}_{q},
	\end{equation}
	we have that
	$
	(t_{1}^{p-2}-t_{2}^{p-2})\mu\|u\|_{p}^{p}+(t_{1}^{q-2}-t_{2}^{q-2})\|v\|_{q}^{q}=0.
	$
	Thus $u=v=0$ which is impossible and the proof is complete. 
\end{proof}


Let us define the Nehari energy level associated with System~\eqref{paper1j0}
$$
c_{\mathcal{N}_{\mu,\mathrm{o}}}=\inf_{(u,v)\in\mathcal{N}_{\mu,\mathrm{o}}} I_{\mu,\mathrm{o}}(u,v).
$$
We claim that $c_{\mathcal{N}_{\mu,\mathrm{o}}}$ is positive. In fact, for any $(u,v)\in\mathcal{N}_{\mu,\mathrm{o}}$ we can deduce that
$$
I_{\mu,\mathrm{o}}(u,v)=\left(\frac{1}{2}-\frac{1}{p}\right)\left(\|(u,v)\|_{E_{\mathrm{o}}}^{2}-2\int_{\mathbb{R}^{N}}\lambda_{\mathrm{o}}(x)uv\; \mathrm{d}x\right)+\left(\frac{1}{p}-\frac{1}{q}\right)\|v\|^{q}_{q}.
$$
Since $2<p\leq q$, it follows from \eqref{paper1j27} and \eqref{paper1j26} that
$$
I_{\mu,\mathrm{o}}(u,v)\geq\left(\frac{1}{2}-\frac{1}{p}\right)(1-\delta)\|(u,v)\|_{E_{\mathrm{o}}}^{2}\geq\left(\frac{1}{2}-\frac{1}{p}\right)(1-\delta)\alpha>0.
$$	

\begin{remark}
	Although we used the notation for periodic functions, all results of this section remain true for asymptotically periodic functions.
\end{remark}

\section{Proof of Theorem \ref{paper1A}}\label{paper1s3}

We can use Ekeland's variational principle (see \cite{ekeland}) to obtain a sequence $(u_{n},v_{n})_{n}\subset\mathcal{N}_{\mu,\mathrm{o}}$ such that
\begin{equation}\label{paper1j6}
	I_{\mu,\mathrm{o}}(u_{n},v_{n})\rightarrow c_{\mathcal{N}_{\mu,\mathrm{o}}} \hspace{0,3cm} \mbox{and} \hspace{0,3cm}  I_{\mu,\mathrm{o}}^{\prime}(u_{n},v_{n})\rightarrow0.
\end{equation}
Notice that $(u_{n},v_{n})_n$ is bounded. In fact, recalling that $p\leq q$ it follows from \eqref{paper1j27} and \eqref{paper1j8} that
\begin{eqnarray*}
	I_{\mu,\mathrm{o}}(u_{n},v_{n}) & = & \left(\frac{1}{2}-\frac{1}{p}\right)\left(\|(u_{n},v_{n})\|_{E_{\mathrm{o}}}^{2}-2\int_{\mathbb{R}^{N}}\lambda_{\mathrm{o}}(x)u_nv_n\; \mathrm{d}x\right)+\left(\frac{1}{p}-\frac{1}{q}\right)\|v_{n}\|_{q}^{q}\\
	& \geq & \left(\frac{1}{2}-\frac{1}{q}\right)\left(1-\delta\right)\|(u_{n},v_{n})\|_{E_{\mathrm{o}}}^{2}.
\end{eqnarray*}
Since $ I_{\mu,\mathrm{o}}(u_{n},v_{n})$ is bounded, we conclude that $(u_{n},v_{n})_{n}$ is bounded in $E_{\mathrm{o}}$. Passing $(u_{n},v_{n})_{n}$ to a subsequence, we way assume that $(u_{n},v_{n})\rightharpoonup (u_{0},v_{0})$ weakly in $E_{\mathrm{o}}$. By a standard argument, we have that $I_{\mu,\mathrm{o}}^{\prime}(u_{0},v_{0})=0$. We recall the following result due to P.L.~Lions \cite[Lemma~1.21]{will} (see also \cite{lionss}).

\begin{lemma}\label{paper1lions}
	Let $r>0$ and $2\leq s<2^{*}$. If $(u_{n})_{n}\subset H^{1}(\mathbb{R}^{N})$ is a bounded sequence such that
	$$
	\lim_{n\rightarrow+\infty}\sup_{y\in\mathbb{R}^{N}}\int_{B_{r}(y)}|u_{n}|^{s}\;\mathrm{d}x=0,
	$$
	then $u_{n}\rightarrow0$ in $L^{s}(\mathbb{R}^{N})$ for $2<s<2^{*}$.
\end{lemma}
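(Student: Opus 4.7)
The proof follows the standard covering argument due to P.-L.~Lions. First, tile $\mathbb R^N$ by translates $B_r(y_i)$ centered at lattice points $y_i\in(r/\sqrt{N})\,\mathbb Z^N$, so that every $x\in\mathbb R^N$ lies in at most $C_N$ of the $B_r(y_i)$. This bounded overlap gives $\sum_i \|u\|_{H^1(B_r(y_i))}^2 \leq C_N\,\|u\|_{H^1(\mathbb R^N)}^2$, and, by translation invariance, the local Sobolev embedding $\|u\|_{L^{2^*}(B_r(y))}\leq C\|u\|_{H^1(B_r(y))}$ holds with constant $C$ independent of $y$.

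The main step is to establish $u_n\to 0$ in $L^p(\mathbb R^N)$ for one convenient exponent $p\in(s,2^*)$. On each ball, H\"older interpolation yields
$$
\int_{B_r(y)}|u|^p\,\mathrm{d}x \leq \left(\int_{B_r(y)}|u|^s\,\mathrm{d}x\right)^{p(1-\theta)/s}\left(\int_{B_r(y)}|u|^{2^*}\,\mathrm{d}x\right)^{p\theta/2^*},
$$
with $\theta\in(0,1)$ determined by $1/p=(1-\theta)/s+\theta/2^*$. A short computation gives $p\theta=(p-s)\,2^*/(2^*-s)$, so since $s<2^*$ we may pick $p<2^*$ with $p\theta\geq 2$. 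The local Sobolev bound, together with the estimate $\|u_n\|_{H^1(B_r(y_i))}^{p\theta}\leq M^{p\theta-2}\|u_n\|_{H^1(B_r(y_i))}^2$ (where $M$ bounds $\|u_n\|_{H^1(\mathbb R^N)}$) and summation over $i$, produces
$$
\int_{\mathbb R^N}|u_n|^p\,\mathrm{d}x \leq C\Bigl(\sup_{y\in\mathbb R^N}\int_{B_r(y)}|u_n|^s\,\mathrm{d}x\Bigr)^{\!p(1-\theta)/s},
$$
which tends to zero by hypothesis.

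Convergence at every exponent $\sigma\in(2,2^*)$ then follows from classical interpolation: for $\sigma\in(2,p]$ bound $\|u_n\|_{L^\sigma}\leq\|u_n\|_{L^2}^{\tau}\|u_n\|_{L^p}^{1-\tau}$, and for $\sigma\in[p,2^*)$ bound $\|u_n\|_{L^\sigma}\leq\|u_n\|_{L^p}^{\tau}\|u_n\|_{L^{2^*}}^{1-\tau}$; in each case $(u_n)$ is bounded in the outer space (via the $H^1$ bound and Sobolev embedding) while converging to zero in $L^p$, whence $\|u_n\|_{L^\sigma}\to 0$.

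The delicate point is the choice of intermediate exponent $p$. If $p\theta<2$, the sum $\sum_i\|u_n\|_{H^1(B_r(y_i))}^{p\theta}$ cannot be dominated by $\|u_n\|_{H^1(\mathbb R^N)}^2$, because $\ell^\gamma$-norms of nonnegative sequences fail to control $\ell^2$-norms when $\gamma<2$. The inequality $p\theta\geq 2$, made possible precisely by the strict bound $s<2^*$, is what closes the covering estimate.
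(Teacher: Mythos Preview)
Your argument is correct and is essentially the standard covering proof of Lions' lemma as presented in Willem's \emph{Minimax Theorems}. The paper itself does not give a proof of this lemma; it simply quotes the statement and cites \cite[Lemma~1.21]{will} and \cite{lionss}, so there is nothing to compare beyond noting that your write-up reproduces the classical argument from those references.
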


\begin{proposition}\label{paper1p3}
	There exists a ground state solution for System~\eqref{paper1j0}.
\end{proposition}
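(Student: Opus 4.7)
The plan is to extract a ground state from the bounded Palais--Smale sequence $(u_n,v_n)_n \subset \mathcal{N}_{\mu,\mathrm{o}}$ produced in \eqref{paper1j6}, using the translation invariance coming from \ref{paper1A1} to recover the compactness lost on $\mathbb{R}^N$. Let $(u_0,v_0)$ denote the weak limit already obtained, which satisfies $I'_{\mu,\mathrm{o}}(u_0,v_0)=0$.

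\textbf{Case 1: $(u_0,v_0)\neq(0,0)$.} Then $(u_0,v_0)\in\mathcal{N}_{\mu,\mathrm{o}}$, so $I_{\mu,\mathrm{o}}(u_0,v_0)\geq c_{\mathcal{N}_{\mu,\mathrm{o}}}$. For the reverse inequality I would rewrite, on $\mathcal{N}_{\mu,\mathrm{o}}$,
\[
I_{\mu,\mathrm{o}}(u,v)=\Bigl(\tfrac12-\tfrac1p\Bigr)\Bigl(\|(u,v)\|_{E_{\mathrm{o}}}^2-2\!\int_{\mathbb{R}^N}\!\lambda_{\mathrm{o}}uv\,\mathrm{d}x\Bigr)+\Bigl(\tfrac1p-\tfrac1q\Bigr)\|v\|_q^q,
\]
and use Lemma~\ref{paper1lemma1} (which makes the bracket an equivalent squared norm) plus weak lower semicontinuity of the norm and of $\|v\|_q^q$ (by Fatou) to conclude $I_{\mu,\mathrm{o}}(u_0,v_0)\leq\liminf_n I_{\mu,\mathrm{o}}(u_n,v_n)=c_{\mathcal{N}_{\mu,\mathrm{o}}}$.

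\textbf{Case 2: $(u_0,v_0)=(0,0)$.} This is the real obstacle and is where periodicity enters. I would first show that vanishing in the Lions sense is impossible: if
\[
\lim_{n\to\infty}\sup_{y\in\mathbb{R}^N}\int_{B_1(y)}(u_n^2+v_n^2)\,\mathrm{d}x=0,
\]
then Lemma~\ref{paper1lions} gives $u_n\to 0$ in $L^p$ and $v_n\to 0$ in $L^q$, so by the Nehari identity \eqref{paper1j8} the quantity $\|(u_n,v_n)\|_{E_{\mathrm{o}}}^2-2\!\int\!\lambda_{\mathrm{o}}u_nv_n\,\mathrm{d}x$ tends to $0$, and then Lemma~\ref{paper1lemma1} forces $\|(u_n,v_n)\|_{E_{\mathrm{o}}}\to 0$, contradicting the uniform lower bound \eqref{paper1j26}. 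Therefore nonvanishing holds: there exist $\rho>0$ and a sequence $y_n\in\mathbb{R}^N$ with $\int_{B_1(y_n)}(u_n^2+v_n^2)\,\mathrm{d}x\geq\rho$.

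Pick $z_n\in\mathbb{Z}^N$ with $|y_n-z_n|\leq\sqrt{N}$ and set $\widetilde{u}_n(x)=u_n(x+z_n)$, $\widetilde{v}_n(x)=v_n(x+z_n)$. By the $1$-periodicity in \ref{paper1A1}, the functional $I_{\mu,\mathrm{o}}$ and the manifold $\mathcal{N}_{\mu,\mathrm{o}}$ are invariant under integer translations, so $(\widetilde{u}_n,\widetilde{v}_n)\subset\mathcal{N}_{\mu,\mathrm{o}}$ is still a bounded Palais--Smale sequence at level $c_{\mathcal{N}_{\mu,\mathrm{o}}}$, and now $\int_{B_{1+\sqrt{N}}(0)}(\widetilde{u}_n^2+\widetilde{v}_n^2)\,\mathrm{d}x\geq\rho$. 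Passing to a subsequence, $(\widetilde{u}_n,\widetilde{v}_n)\rightharpoonup(\widetilde{u}_0,\widetilde{v}_0)$ weakly in $E_{\mathrm{o}}$, and the local compact embedding $H^1(B_{1+\sqrt{N}})\hookrightarrow L^2(B_{1+\sqrt{N}})$ ensures $(\widetilde{u}_0,\widetilde{v}_0)\neq(0,0)$. Standard arguments give $I'_{\mu,\mathrm{o}}(\widetilde{u}_0,\widetilde{v}_0)=0$, so we are back in Case~1 with $(\widetilde{u}_0,\widetilde{v}_0)$ playing the role of the weak limit, yielding $I_{\mu,\mathrm{o}}(\widetilde{u}_0,\widetilde{v}_0)=c_{\mathcal{N}_{\mu,\mathrm{o}}}$ and completing the proof. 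The main difficulty is precisely this dichotomy step, as the loss of compactness due to translations in $\mathbb{R}^N$ is the only thing that could prevent the weak limit from being nontrivial; the subcritical assumption $q<2^*$ is crucial so that Lions' lemma applies to both components.
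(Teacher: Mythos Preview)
Your proposal is correct and follows essentially the same route as the paper's proof: a dichotomy between the nontrivial weak limit case and the vanishing case, ruled out via Lions' lemma and the Nehari lower bound, followed by integer translation using the periodicity in \ref{paper1A1}. The only cosmetic difference is in Case~1: the paper passes through the identity $I_{\mu,\mathrm{o}}-\tfrac12\langle I'_{\mu,\mathrm{o}},\cdot\rangle=(\tfrac12-\tfrac1p)\mu\|u\|_p^p+(\tfrac12-\tfrac1q)\|v\|_q^q$ and invokes Fatou on the $L^p$/$L^q$ norms, whereas you use the alternative Nehari representation involving the quadratic form $\|(u,v)\|_{E_{\mathrm{o}}}^2-2\!\int\!\lambda_{\mathrm{o}}uv$ and its weak lower semicontinuity as an equivalent norm; both arguments are valid and yield the same inequality.
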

\begin{proof}
	We split the argument into two cases.
	
	\noindent\textbf{Case 1.} $(u_{0},v_{0})\neq(0,0)$.
	
	In this case, $(u_{0},v_{0})$ is a nontrivial critical point of the energy functional $I_{\mu,\mathrm{o}}$. Thus, $(u_{0},v_{0})\in\mathcal{N}_{\mu,\mathrm{o}}$. It remains to prove that $I_{\mu,\mathrm{o}}(u_{0},v_{0})=c_{\mathcal{N}_{\mu,\mathrm{o}}}$. It is clear that $c_{\mathcal{N}_{\mu,\mathrm{o}}}\leq I_{\mu,\mathrm{o}}(u_{0},v_{0})$. On the other hand, by using the semicontinuity of norm, we can deduce that
	\begin{eqnarray*}
		c_{\mathcal{N}_{\mu,\mathrm{o}}}+o_{n}(1) & = &  I_{\mu,\mathrm{o}}(u_{n},v_{n})-\frac{1}{2}\langle  I_{\mu,\mathrm{o}}^{\prime}(u_{n},v_{n}),(u_{n},v_{n})\rangle\\
		& = & \left(\frac{1}{2}-\frac{1}{p}\right)\mu\|u_{n}\|_{p}^{p}+\left(\frac{1}{2}-\frac{1}{q}\right)\|v_{n}\|_{q}^{q}\\
		& \geq & \left(\frac{1}{2}-\frac{1}{p}\right)\mu\|u_{0}\|_{p}^{p}+\left(\frac{1}{2}-\frac{1}{q}\right)\|v_{0}\|_{q}^{q}+o_{n}(1)\\
		& = &  I_{\mu,\mathrm{o}}(u_{0},v_{0})-\frac{1}{2}\langle  I_{\mu,\mathrm{o}}^{\prime}(u_{0},v_{0}),(u_{0},v_{0})\rangle+o_{n}(1)\\
		& = &  I_{\mu,\mathrm{o}}(u_{0},v_{0})+o_{n}(1),
	\end{eqnarray*}  
	which implies that $c_{\mathcal{N}_{\mu,\mathrm{o}}}\geq I_{\mu,\mathrm{o}}(u_{0},v_{0})$. Therefore, $I_{\mu,\mathrm{o}}(u_{0},v_{0})=c_{\mathcal{N}_{\mu,\mathrm{o}}}$.
	
	\noindent\textbf{Case 2.} $(u_{0},v_{0})=(0,0)$.
	
	We claim that there exist a sequence $(y_{n})_{n}\subset\mathbb{R}^{N}$ and constants $R,\xi>0$ such that
	\begin{equation}\label{paper1j50}
		\liminf_{n\rightarrow\infty}\int_{B_{R}(y_{n})}(u_{n}^{2}+v_{n}^{2})\;\mathrm{d}x\geq\xi>0.
	\end{equation}
	Suppose by contradiction that \eqref{paper1j50} does not hold. Thus, for any $R>0$ we have 
	$$
	\lim_{n\rightarrow\infty}\sup_{y\in\mathbb{R}^{N}}\int_{B_{R}(y)}(u_{n}^{2}+v_{n}^{2})\;\mathrm{d}x=0.
	$$
	It follows from Lemma~\ref{paper1lions} that $u_{n}\rightarrow0$ strongly in $L^{p}(\mathbb{R}^{N})$ and $v_{n}\rightarrow0$ strongly $L^{q}(\mathbb{R}^{N})$, for $2<p,q<2^{*}$. Since $(u_{n},v_{n})_{n}\subset\mathcal{N}_{\mu,\mathrm{o}}$, we can deduce that
	$$
	0<(1-\delta)\alpha\leq (1-\delta)\|(u_{n},v_{n})\|_{E_{\mathrm{o}}}^{2}\leq \mu\|u_{n}\|_{p}^{p}+\|v_{n}\|_{q}^{q}\rightarrow0,
	$$
	which implies that $(u_{n},v_{n})\rightarrow0$ strongly in $E_{\mathrm{o}}$. But this is impossible, since $I_{\mu,\mathrm{o}}$ is continuous and $I_{\mu,\mathrm{o}}(u_{n},v_{n})\rightarrow c_{\mathcal{N}_{\mu,\mathrm{o}}}>0$. Therefore, \eqref{paper1j50} holds. 
	
	We may assume without loss of generality that $(y_{n})_{n}\subset\mathbb{Z}^{N}$. Let us consider the shift sequence $(\tilde{u}_{n}(x),\tilde{v}_{n}(x))=(u_{n}(x+y_{n}),v_{n}(x+y_{n}))$. Since $V_{1,\mathrm{o}}(\cdot)$, $V_{2,\mathrm{o}}(\cdot)$ and $\lambda_{\mathrm{o}}(\cdot)$ are $1$-periodic functions, it follows that the energy functional $I_{\mu,\mathrm{o}}$ is invariant under translations of the form $(u,v)\mapsto (u(\cdot-z),v(\cdot-z))$ with $z\in\mathbb{Z}^{N}$. By a careful computation we can deduce that
	$$
	\|(u_{n},v_{n})\|_{E_{\mathrm{o}}}=\|(\tilde{u}_{n},\tilde{v}_{n})\|_{E_{\mathrm{o}}}, \quad I_{\mu,\mathrm{o}}(u_{n},v_{n})=I_{\mu,\mathrm{o}}(\tilde{u}_{n},\tilde{v}_{n})\rightarrow c_{\mathcal{N}_{\mu,\mathrm{o}}} \quad \mbox{and} \quad I_{\mu,\mathrm{o}}^{\prime}(\tilde{u}_{n},\tilde{v}_{n})\rightarrow0.
	$$
	Moreover, arguing as before, we can conclude that $(\tilde{u}_{n},\tilde{v}_{n})_{n}$ is a bounded sequence in $E_{\mathrm{o}}$. In this way, there exists a critical point $(\tilde{u},\tilde{v})$ of $I_{\mu,\mathrm{o}}$, such that, up to a subsequence, $(\tilde{u}_{n},\tilde{v}_{n})\rightharpoonup(\tilde{u},\tilde{v})$ weakly in $E_{\mathrm{o}}$ and $(\tilde{u}_{n},\tilde{v}_{n})\rightarrow(\tilde{u},\tilde{v})$ strongly in $L^{2}(B_{R}(0))\times L^{2}(B_{R}(0))$. Thus, using \eqref{paper1j50} we obtain
	$$
	\int_{B_{R}(0)}(\tilde{u}^{2}+\tilde{v}^{2})\;\mathrm{d}x=\liminf_{n\rightarrow\infty}\int_{B_{R}(0)}(\tilde{u}_{n}^{2}+\tilde{v}_{n}^{2})\;\mathrm{d}x=\liminf_{n\rightarrow\infty}\int_{B_{R}(y_{n})}(u_{n}^{2}+v_{n}^{2})\;\mathrm{d}x\geq\xi>0.
	$$
	Therefore, $\tilde{u}\not\equiv0$ or $\tilde{v}\not\equiv0$. The conclusion follows as in the \textbf{Case 1}.
\end{proof}

\begin{proposition}\label{paper1p9}
	There exists a nonnegative ground state solution $(\tilde{u},\tilde{v})\in C^{1,\beta}_{loc}(\mathbb{R}^{N})\times C^{1,\beta}_{loc}(\mathbb{R}^{N})$ for System~\eqref{paper1j0}, for some $\beta\in(0,1)$.
\end{proposition}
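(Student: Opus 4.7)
The plan is to obtain the claimed ground state in two steps: first, construct a nonnegative element of $\mathcal{N}_{\mu,\mathrm{o}}$ realizing $c_{\mathcal{N}_{\mu,\mathrm{o}}}$; second, upgrade its regularity by a standard elliptic bootstrap. The existence of \emph{some} ground state $(u_{0},v_{0})$ is already supplied by Proposition~\ref{paper1p3}, so the only work is to (a) replace it by a nonnegative ground state and (b) prove $C^{1,\beta}_{loc}$ regularity.

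For (a), I would pass to the symmetrized pair $(|u_{0}|,|v_{0}|)\in E_{\mathrm{o}}\setminus\{(0,0)\}$. Taking absolute values preserves all the relevant quantities: $\||u_{0}|\|_{E_{1,\mathrm{o}}}=\|u_{0}\|_{E_{1,\mathrm{o}}}$, $\||v_{0}|\|_{E_{2,\mathrm{o}}}=\|v_{0}\|_{E_{2,\mathrm{o}}}$, $\||u_{0}|\|_{p}=\|u_{0}\|_{p}$, $\||v_{0}|\|_{q}=\|v_{0}\|_{q}$. By Lemma~\ref{paper1p1} there is a unique $t_{*}>0$ with $(t_{*}|u_{0}|,t_{*}|v_{0}|)\in\mathcal{N}_{\mu,\mathrm{o}}$, giving a nonnegative candidate $(\tilde{u},\tilde{v}):=(t_{*}|u_{0}|,t_{*}|v_{0}|)$. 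The crucial step is to verify $I_{\mu,\mathrm{o}}(\tilde{u},\tilde{v})\leq c_{\mathcal{N}_{\mu,\mathrm{o}}}$; combined with the automatic bound $I_{\mu,\mathrm{o}}(\tilde{u},\tilde{v})\geq c_{\mathcal{N}_{\mu,\mathrm{o}}}$ from $(\tilde{u},\tilde{v})\in\mathcal{N}_{\mu,\mathrm{o}}$, this forces equality. When $\lambda_{\mathrm{o}}\geq 0$ the verification is immediate, since $|u_{0}||v_{0}|\geq u_{0}v_{0}$ implies $I_{\mu,\mathrm{o}}(t|u_{0}|,t|v_{0}|)\leq I_{\mu,\mathrm{o}}(tu_{0},tv_{0})$ for every $t>0$, and the maximum characterization $I_{\mu,\mathrm{o}}(u_{0},v_{0})=\max_{t\geq 0}I_{\mu,\mathrm{o}}(tu_{0},tv_{0})$ from Lemma~\ref{paper1p1} closes the argument. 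In the general sign-changing case one may instead compare, among the four rescaled candidates $(\pm t_{*}|u_{0}|,\pm t_{*}|v_{0}|)\in\mathcal{N}_{\mu,\mathrm{o}}$, the configuration with minimal energy.

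For (b), I would write each equation in the form $-\Delta\tilde{u}=-V_{1,\mathrm{o}}\tilde{u}+\mu\tilde{u}^{p-1}+\lambda_{\mathrm{o}}\tilde{v}$, and analogously for $\tilde{v}$. Since $2<p\leq q<2^{*}$ and $\tilde{u},\tilde{v}\in H^{1}(\mathbb{R}^{N})\hookrightarrow L^{2^{*}}(\mathbb{R}^{N})$, the subcriticality allows a Brezis--Kato (or Moser) iteration: starting from $L^{2^{*}}$ and using that $\tilde{u}^{p-1},\tilde{v}^{q-1}$ then sit in better $L^{r}_{loc}$ spaces, one bootstraps to $\tilde{u},\tilde{v}\in L^{r}_{loc}(\mathbb{R}^{N})$ for every finite $r$. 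Interior $L^{r}$-elliptic regularity then upgrades this to $\tilde{u},\tilde{v}\in W^{2,r}_{loc}(\mathbb{R}^{N})$, and Morrey's embedding yields $\tilde{u},\tilde{v}\in C^{1,\beta}_{loc}(\mathbb{R}^{N})$ for some $\beta\in(0,1)$.

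The main obstacle I anticipate is step (a) when $\lambda_{\mathrm{o}}$ is allowed to change sign under \ref{paper1A3}, since the naive pointwise comparison $I_{\mu,\mathrm{o}}(t|u_{0}|,t|v_{0}|)\leq I_{\mu,\mathrm{o}}(tu_{0},tv_{0})$ no longer holds; the four-sign variant above, or a direct weak-equation argument exploiting the potential-coupling bound in \ref{paper1A3} to force $u_{0}v_{0}\geq 0$ almost everywhere, should patch this. The regularity portion in step (b) is routine and requires no new ingredient beyond standard subcritical bootstrap.
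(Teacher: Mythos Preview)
Your two-step plan---project $(|u_0|,|v_0|)$ onto $\mathcal{N}_{\mu,\mathrm{o}}$ via Lemma~\ref{paper1p1}, compare energies through the max characterization, then run a subcritical elliptic bootstrap to $W^{2,r}_{loc}$ for all $r<\infty$ and invoke Morrey---is exactly the paper's argument. The paper simply writes
\[
I_{\mu,\mathrm{o}}(t_\mu|u_0|,t_\mu|v_0|)\le I_{\mu,\mathrm{o}}(t_\mu u_0,t_\mu v_0)\le \max_{t\ge0}I_{\mu,\mathrm{o}}(tu_0,tv_0)=I_{\mu,\mathrm{o}}(u_0,v_0)=c_{\mathcal{N}_{\mu,\mathrm{o}}}
\]
without further justification, so it does not address the sign-changing possibility for $\lambda_{\mathrm{o}}$ under \ref{paper1A3} that you (correctly) flag; the first inequality is immediate only when $\lambda_{\mathrm{o}}\ge 0$. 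One caution about your proposed patch: the ``four-sign'' comparison does not produce a \emph{nonnegative} ground state, because among $(\pm|u_0|,\pm|v_0|)$ only the $(+,+)$ pair is nonnegative, the sign-flipped pairs project to $\mathcal{N}_{\mu,\mathrm{o}}$ with \emph{different} scalars (the Nehari constraint depends on $\int\lambda_{\mathrm{o}}uv$), and there is no reason the minimal-energy configuration should be the nonnegative one. Your regularity sketch matches the paper's bootstrap, which iterates $r_{n+1}=\tfrac{1}{q-1}\,\tfrac{Nr_n}{N-2r_n}$ from $r_1=2^{*}/(q-1)$.
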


\begin{proof}
	Let $(u_{0},v_{0})\in\mathcal{N}_{\mu,\mathrm{o}}$ be the ground state obtained in the proposition~\ref{paper1p3}. From Lemma \ref{paper1p1}, there exists $t_{\mu}>0$ such that $(t_{\mu}|u_{0}|,t_{\mu}|v_{0}|)\in\mathcal{N}_{\mu,\mathrm{o}}$. Thus, we have that
	$$
	I_{\mu,\mathrm{o}}(t_{\mu}|u_{0}|,t_{\mu}|v_{0}|)\leq I_{\mu,\mathrm{o}}(t_{\mu}u_{0},t_{\mu}v_{0})\leq \max_{t\geq0}I_{\mu,\mathrm{o}}(tu_{0},tv_{0})= I_{\mu,\mathrm{o}}(u_{0},v_{0})=c_{\mathcal{N}_{\mu,\mathrm{o}}},
	$$
	which implies that $(t_{\mu}|u_{0}|,t_{\mu}|v_{0}|)$ is also a minimizer of $ I_{\mu,\mathrm{o}}$ on $\mathcal{N}_{\mu,\mathrm{o}}$. Therefore, $(t_{\mu}|u_{0}|,t_{\mu}|v_{0}|)$ is a nonnegative ground state solution for System~\eqref{paper1j0}. 
	
	To prove the regularity, we use the standard bootstrap argument. We denote $(\tilde{u},\tilde{v})=(t_{\mu}|u_{0}|,t_{\mu}|v_{0}|)$ and we define
	$$
	p_{1}(x)=\mu|\tilde{u}|^{p-2}\tilde{u}+\lambda_{\mathrm{o}}(x)\tilde{v}-V_{1,\mathrm{o}}(x)\tilde{u} \quad \mbox{and} \quad p_{2}(x)=|\tilde{v}|^{q-2}\tilde{v}+\lambda_{\mathrm{o}}(x)\tilde{u}-V_{2,\mathrm{o}}(x)\tilde{v}.
	$$
	Thus, $(\tilde{u},\tilde{v})$ is a weak solution of the restricted problem
	\begin{equation}\label{paper1j37}
		\left\{
		\begin{array}{lr}
			-\Delta \tilde{u}=p_{1}(x), & \quad x\in B_{1}(0),\\
			-\Delta \tilde{v}=p_{2}(x), & x\in B_{1}(0).
		\end{array}
		\right.
	\end{equation} 
	Using Sobolev embedding we have that $V_{1,\mathrm{o}}\tilde{u}, V_{2,\mathrm{o}}\tilde{v},\lambda_{\mathrm{o}}\tilde{u}, \lambda_{\mathrm{o}}\tilde{v}\in L^{2^{*}}(B_{1}(0))$. Moreover, $|\tilde{u}|^{p-2}\tilde{u}\in L^{r}(B_{1}(0))$ for all $1\leq r\leq 2^{*}/(p-1)$ and $|\tilde{v}|^{q-2}\tilde{v}\in L^{s}(B_{1}(0))$ for all $1\leq s\leq 2^{*}/(q-1)$.
	Let us define $r_{1}=2^{*}/(q-1)$. Since $p\leq q$, it follows that $r_{1}\leq 2^{*}/(p-1)$. Hence $|\tilde{u}|^{p-2}\tilde{u}\in L^{r_{1}}(B_{1}(0))$. Therefore, $p_{1},p_{2}\in L^{r_{1}}(B_{1}(0))$. On the other hand, for each $i=1,2$ let $w_{i}$ be the Newtonian potential of $p_{i}(x)$. Thus, in light of \cite[Theorem 9.9]{trud} we have $w_{i}\in W^{2,r_{1}}(B_{1}(0))$ and
	\begin{equation}\label{paper1j38}
		\left\{
		\begin{array}{lr}
			\Delta w_{1}=p_{1}(x), & \quad x\in B_{1}(0),\\
			\Delta w_{2}=p_{2}(x), & x\in B_{1}(0).
		\end{array}
		\right.
	\end{equation}  
	Therefore, $(\tilde{u}-w_{1},\tilde{v}-w_{2})\in H^{1}(B_{1}(0))\times H^{1}(B_{1}(0))$ is a weak solution of the problem
	$$
	\left\{
	\begin{array}{lr}
	\Delta z_{1}=0, & \quad \mbox{in} \ B_{1}(0),\\
	\Delta z_{2}=0, & \mbox{in} \ B_{1}(0).
	\end{array}
	\right.
	$$
	In view of \cite[Corollary 1.2.1]{jost}, we have that $(\tilde{u}-w_{1},\tilde{v}-w_{2})\in C^{\infty}(B_{1}(0))\times C^{\infty}(B_{1}(0))$. Therefore, $(\tilde{u},\tilde{v})\in W^{2,r_{1}}(B_{1}(0))\times W^{2,r_{1}}(B_{1}(0))$. Since $q-1<2^{*}-1$, there exists $\delta>0$ such that $(q-1)(1+\delta)=2^{*}-1$. Thus, one has
	\begin{equation}\label{paper1j40}
		r_{1}=\frac{2^{*}}{q-1}=2^{*}\frac{(1+\delta)}{2^{*}-1}=\frac{2N}{N+2}(1+\delta).
	\end{equation}
	Recall the Sobolev embedding $W^{2,r_{1}}(B_{1}(0))\hookrightarrow L^{s_{1}}(B_{1}(0))$ with $s_{1}=Nr_{1}/(N-2r_{1})$. We claim that there exists $r_{2}\in(r_{1},s_{1})$ such that $(\tilde{u},\tilde{v})\in W^{2,r_{2}}(B_{1}(0))\times W^{2,r_{2}}(B_{1}(0))$. Indeed, we define $r_{2}=s_{1}/(q-1)$ and we note that $r_{2}<s_{1}$. By using \eqref{paper1j40} we deduce that
	$$
	\frac{r_{2}}{r_{1}}=\frac{Nr_{1}}{(q-1)(N-2r_{1})r_{1}}=\frac{(N-2)(1+\delta)}{N-2-4\delta}>1+\delta,
	$$
	which implies that $r_{2}\in(r_{1},s_{1})$. By Sobolev embedding, we have
	$$
	W^{2,r_{1}}(B_{1}(0))\hookrightarrow L^{s_{1}}(B_{1}(0))\hookrightarrow L^{r_{2}}(B_{1}(0)).
	$$
	Hence, $p_{1}(x),p_{2}(x)\in L^{r_{2}}(B_{1}(0))$. From the same argument used before, we can conclude that $(\tilde{u},\tilde{v})\in W^{2,r_{2}}(B_{1}(0))\times W^{2,r_{2}}(B_{1}(0))$. Iterating, we obtain the following sequence
	$$
	r_{n+1}=\frac{1}{q-1}\left(\frac{Nr_{n}}{N-2r_{n}}\right).
	$$
	Notice that $r_{n+1}\rightarrow\infty$, as $n\rightarrow\infty$. Therefore,
	$$
	(\tilde{u},\tilde{v})\in W^{2,r}_{loc}(\mathbb{R}^{N})\times W^{2,r}_{loc}(\mathbb{R}^{N}), \quad \mbox{for all}  \hspace{0,2cm} 2\leq r<\infty.
	$$
	From Sobolev embedding, we have that $(\tilde{u},\tilde{v})\in C^{1,\beta}(B_{1}(0))\times C^{1,\beta}(B_{1}(0))$, for some $\beta\in(0,1)$.
\end{proof}

\begin{proposition}\label{paper1p11}
	If \ref{paper1A9} holds, then the ground state is positive.
\end{proposition}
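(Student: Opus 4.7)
The plan is to deduce strict positivity from the nonnegativity established in Proposition~\ref{paper1p9} by invoking the strong maximum principle componentwise, exploiting the strict positivity $\lambda_{\mathrm{o}}(x)>0$ in \ref{paper1A9} to transfer positivity between the two components through the linear coupling. Let $(\tilde u,\tilde v)=(t_\mu|u_0|,t_\mu|v_0|)$ be the nonnegative classical ground state produced by Proposition~\ref{paper1p9}. I would rewrite the system as
$$
-\Delta\tilde u+c_1(x)\tilde u=\lambda_{\mathrm{o}}(x)\tilde v,\qquad -\Delta\tilde v+c_2(x)\tilde v=\lambda_{\mathrm{o}}(x)\tilde u,
$$
where $c_1(x):=V_{1,\mathrm{o}}(x)-\mu\tilde u(x)^{p-2}$ and $c_2(x):=V_{2,\mathrm{o}}(x)-\tilde v(x)^{q-2}$ belong to $L^{\infty}_{loc}(\mathbb{R}^{N})$ thanks to the $C^{1,\beta}_{loc}$ regularity of $(\tilde u,\tilde v)$, and the right-hand sides are nonnegative.

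Next I would rule out the degenerate possibilities $\tilde u\equiv 0$ and $\tilde v\equiv 0$. If $\tilde u\equiv 0$, the first equation collapses to $\lambda_{\mathrm{o}}(x)\tilde v(x)\equiv 0$; since \ref{paper1A9} gives $\lambda_{\mathrm{o}}>0$ pointwise, this forces $\tilde v\equiv 0$, contradicting the fact that $(\tilde u,\tilde v)$ realizes the positive ground state level $c_{\mathcal{N}_{\mu,\mathrm{o}}}>0$ established in Section~\ref{paper1s1}. Symmetrically, $\tilde v\not\equiv 0$.

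Finally I would apply the strong maximum principle locally. On any ball $B_{R}\subset\mathbb{R}^{N}$, $c_1$ is bounded, so I can choose $K>0$ with $c_1+K\geq 0$ on $B_{R}$ and rewrite the first equation as
$$
-\Delta\tilde u+(c_1(x)+K)\tilde u=\lambda_{\mathrm{o}}(x)\tilde v+K\tilde u\geq 0.
$$
The classical strong maximum principle (see, e.g., \cite[Theorem~8.19]{trud}) then yields that if $\tilde u$ attains the value $0$ at some interior point of $B_{R}$, then $\tilde u\equiv 0$ in $B_{R}$; a standard connectedness argument with an exhausting family of balls extends this to $\tilde u\equiv 0$ on $\mathbb{R}^{N}$, contradicting the previous paragraph. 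Hence $\tilde u>0$ everywhere in $\mathbb{R}^{N}$, and the identical argument applied to the second equation gives $\tilde v>0$.

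The only technical point — and the main obstacle — is that the zeroth-order coefficients $c_i(x)$ are not sign-definite, the nonlinear contributions $-\mu\tilde u^{p-2}$ and $-\tilde v^{q-2}$ being negative, so one cannot invoke the strong maximum principle directly but must first shift by a positive constant $K$ on each ball; the $L^{\infty}_{loc}$ character of $c_i$ coming from the $C^{1,\beta}_{loc}$ regularity makes this shift available, and the rest of the argument is routine once \ref{paper1A9} is used to guarantee that the coupling term indeed transmits positivity.
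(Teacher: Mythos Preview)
Your proposal is correct and follows essentially the same strategy as the paper: use $\lambda_{\mathrm{o}}>0$ to rule out either component being identically zero, then apply the strong maximum principle \cite[Theorem~8.19]{trud} on balls together with connectedness. The only difference is that the paper leaves the nonlinear term $\mu|\tilde u|^{p-2}\tilde u$ on the right-hand side (where it is automatically nonnegative since $\tilde u\geq 0$), so the zeroth-order coefficient is simply $V_{1,\mathrm{o}}\geq 0$ and no shift by $K$ is needed; your absorption of the nonlinearity into $c_1$ works but is an avoidable complication.
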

\begin{proof}
	Let $(\tilde{u},\tilde{v})\in E_{\mathrm{o}}\backslash\{(0,0)\}$ be the nonnegative ground state obtained in Proposition \ref{paper1p9}. Since $(\tilde{u},\tilde{v})\neq(0,0)$ we may assume without loss of generality that $\tilde{u}\neq0$. We claim that $\tilde{v}\neq0$. In fact, arguing by contradiction we suppose that $\tilde{v}=0$. Thus,
	$$
	0=\langle I_{\mu,\mathrm{o}}^{\prime}(\tilde{u},0),(0,\psi)\rangle=-\int_{\mathbb{R}^{N}}\lambda_{\mathrm{o}}(x)\tilde{u}\psi\;\mathrm{d}x, \quad \mbox{for all} \hspace{0,2cm} \psi\in C^{\infty}_{0}(\mathbb{R}^{N}).
	$$
	Since $\lambda_{\mathrm{o}}(x)$ is positive, we have that $\tilde{u}=0$ which is a contradiction. Therefore, $\tilde{v}\neq0$. 
	
	Taking $(\varphi,0)$ as test function one sees that
	$$
	\int_{\mathbb{R}^{N}}\nabla\tilde{u}\nabla\varphi\;\mathrm{d}x+\int_{\mathbb{R}^{N}}V_{1,\mathrm{o}}(x)\tilde{u}\varphi\;\mathrm{d}x=\mu\int_{\mathbb{R}^{N}}|\tilde{u}|^{p-2}\tilde{u}\varphi\;\mathrm{d}x+\int_{\mathbb{R}^{N}}\lambda_{\mathrm{o}}(x)\tilde{v}\varphi\;\mathrm{d}x\geq0,
	$$
	for all $\varphi\geq0$, $\varphi\in C^{\infty}_{0}(\mathbb{R}^{N})$. Thus, we can deduce that
	$$
	\int_{\mathbb{R}^{N}}\nabla(-\tilde{u})\nabla\varphi\;\mathrm{d}x-\int_{\mathbb{R}^{N}}\left[-V_{1,\mathrm{o}}(x)\right](-\tilde{u})\varphi\;\mathrm{d}x\leq0, \quad \mbox{for all} \hspace{0,2cm} \varphi\geq0, \ \varphi\in C^{\infty}_{0}(\mathbb{R}^{N}).
	$$
	Moreover, since $V_{1,\mathrm{o}}(x)\geq0$ for all $x\in\mathbb{R}^{N}$, it follows that
	$$
	-\int_{\mathbb{R}^{N}}V_{1,\mathrm{o}}(x)\varphi\;\mathrm{d}x\leq0, \quad \mbox{for all} \hspace{0,2cm} \varphi\geq0, \ \varphi\in C^{\infty}_{0}(\mathbb{R}^{N}).
	$$
	In order to prove that $(\tilde{u},\tilde{v})$ is positive, we suppose by contradiction that there exists $p\in\mathbb{R}^{N}$ such that $\tilde{u}(p)=0$. Thus, since $-\tilde{u}\leq0$ in $\mathbb{R}^{N}$, for any $R>>R_{0}>0$ we have that
	$$
	0=\sup_{B_{R_{0}}(p)}(-\tilde{u})=\sup_{B_{R}(p)}(-\tilde{u}).
	$$
	By the Strong Maximum Principle \cite[Theorem~8.19]{trud} we conclude that $-\tilde{u}\equiv0$ in $B_{R}(p)$, for all $R>R_{0}$. Therefore, $\tilde{u}\equiv0$ in $\mathbb{R}^{N}$ which is a contradiction. Therefore $\tilde{u}>0$ in $\mathbb{R}^{N}$. Analogously we can prove that $\tilde{v}>0$ in $\mathbb{R}^{N}$. Therefore, the ground state $(\tilde{u},\tilde{v})$ is positive.
\end{proof}	

Theorem~\ref{paper1A} follows from Propositions \ref{paper1p3}, \ref{paper1p9} and \ref{paper1p11}.





\section{Proof of Theorem \ref{paper1B}}\label{paper1s5}

In this section, we deal with System~\eqref{paper1j0} when $2<p<q=2^{*}$. Analogously to Theorem~\ref{paper1A}, we have a sequence $(u_{n},v_{n})_{n}\subset\mathcal{N}$ satisfying \eqref{paper1j6}. Moreover, the sequence is bounded and $(u_{n},v_{n})\rightharpoonup (u_{0},v_{0})$ weakly in $E$. We have also that $(u_{0},v_{0})$ is a critical point of the energy functional $I$. We denote by $S$ the sharp constant of the embedding $D^{1,2}(\mathbb{R}^{N})\hookrightarrow L^{2^{*}}(\mathbb{R}^{N})$
\begin{equation}\label{sharp}
	S\left(\int_{\mathbb{R}^{N}}|u|^{2^{*}}\;\mathrm{d}x\right)^{2/2^{*}}\leq \int_{\mathbb{R}^{N}}|\nabla u|^{2}\;\mathrm{d}x,
\end{equation}
where $D^{1,2}(\mathbb{R}^{N}):=\{u\in L^{2^{*}}(\mathbb{R}^{N}):|\nabla u|\in L^{2}(\mathbb{R}^{N})\}$. In order to get a nontrivial critical point for $I_{\mu,\mathrm{o}}$ we need the following lemma:

\begin{lemma}\label{paper1mu}
	There exists $\mu_{0}>0$ such that $c_{\mathcal{N}_{\mu,\mathrm{o}}}<\frac{1}{N}S^{N/2}$, for all $\mu\geq\mu_{0}$. 
\end{lemma}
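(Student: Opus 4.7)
The plan is to exhibit a fixed test pair whose associated Nehari projection has energy tending to $0$ as $\mu \to \infty$. Since $p < 2^{*} = q$, the subcritical exponent in the first equation is precisely what allows this. Concretely, I would test on pairs of the form $(\phi,0)$ to completely avoid the critical term $\|v\|_{2^*}^{2^*}$ and the coupling $\lambda_{\mathrm{o}} uv$.

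First, I would fix any $\phi \in C^{\infty}_{0}(\mathbb{R}^{N})\setminus\{0\}$, which belongs to $E_{1,\mathrm{o}}$, so that $(\phi,0)\in E_{\mathrm{o}}\setminus\{(0,0)\}$. By Lemma~\ref{paper1p1}, there exists a unique $t_{\mu}>0$ with $(t_{\mu}\phi,0)\in\mathcal{N}_{\mu,\mathrm{o}}$. The Nehari identity \eqref{paper1j8} collapses (since the second component and the coupling integrals vanish) to
$$
t_{\mu}^{2}\,\|\phi\|_{E_{1,\mathrm{o}}}^{2} = \mu\, t_{\mu}^{p}\,\|\phi\|_{p}^{p},
$$
giving the explicit value $t_{\mu}^{p-2}=\|\phi\|_{E_{1,\mathrm{o}}}^{2}/(\mu\|\phi\|_{p}^{p})$.

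Next, I would compute the energy along this curve. Using the Nehari relation to eliminate $\mu\|t_\mu\phi\|_p^p$, one gets
$$
I_{\mu,\mathrm{o}}(t_{\mu}\phi,0)=\left(\frac{1}{2}-\frac{1}{p}\right)t_{\mu}^{2}\|\phi\|_{E_{1,\mathrm{o}}}^{2} = \left(\frac{1}{2}-\frac{1}{p}\right)\|\phi\|_{E_{1,\mathrm{o}}}^{2}\left(\frac{\|\phi\|_{E_{1,\mathrm{o}}}^{2}}{\mu\|\phi\|_{p}^{p}}\right)^{\!2/(p-2)}.
$$
Since $p>2$, the right-hand side tends to $0$ as $\mu\to\infty$. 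Choosing $\mu_{0}>0$ so large that this quantity falls below $\frac{1}{N}S^{N/2}$ for every $\mu\geq\mu_{0}$, and using that $c_{\mathcal{N}_{\mu,\mathrm{o}}}\leq I_{\mu,\mathrm{o}}(t_{\mu}\phi,0)$ by definition of the infimum, yields the claim.

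There is no serious obstacle here; the only subtlety is the conceptual one of choosing a test pair that avoids the critical nonlinearity. One could alternatively work with an Aubin--Talenti bubble $(\phi,\eta_{\epsilon}U_{\epsilon})$ and try to sharpen the threshold on $\mu$, but this would force one to estimate delicate cancellations between the critical term and the coupling term $\int\lambda_{\mathrm{o}}\phi\,U_{\epsilon}$. The $(\phi,0)$ choice is cleaner, sufficient for the stated qualitative result, and uses only the hypothesis $p<2^{*}$ that distinguishes the setting of Theorem~\ref{paper1B} from the pure critical case treated in Theorem~\ref{paper1C}.
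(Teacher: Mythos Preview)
Your proof is correct and follows essentially the same strategy as the paper: fix a test element, project it onto $\mathcal{N}_{\mu,\mathrm{o}}$ via Lemma~\ref{paper1p1}, and show that the Nehari scale $t_\mu\to 0$ as $\mu\to\infty$, which drives the energy below $\frac{1}{N}S^{N/2}$. The only difference is cosmetic: the paper takes a pair $(u,v)$ with both components nonzero and reads off $t_\mu\to 0$ qualitatively from \eqref{jm1}, while you set $v=0$ and obtain $t_\mu$ explicitly---a clean and legitimate simplification.
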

\begin{proof}
	Let us consider $(u,v)\in E_{\mathrm{o}}$ such that $u,v\geq0$ and $u,v\not\equiv0$. It follows from Lemma~\ref{paper1p1} that there exists a unique $t_{\mu}>0$, depending on $\mu>0$ and $(u,v)$, such that $(t_{\mu}u,t_{\mu}v)\in\mathcal{N}_{\mu,\mathrm{o}}$. Thus, by using relation \eqref{jm1} we can conclude that $t_{\mu}\rightarrow0$ as $\mu\rightarrow+\infty$. Moreover, we have that
	$$
	c_{\mathcal{N}_{\mu,\mathrm{o}}}\leq I_{\mu,\mathrm{o}}(t_{\mu}u,t_{\mu}v)\leq\frac{t_{\mu}^{2}}{2}\left(\|(u,v)\|_{E_{\mathrm{o}}}^{2}-2\int_{\mathbb{R}^{N}}\lambda_{\mathrm{o}}(x)uv\;\mathrm{d}x\right),
	$$
	and the right hand side goes to zero as $\mu$ goes to infinity. Therefore, there exists $\mu_{0}>0$ such that $c_{\mathcal{N}_{\mu,\mathrm{o}}}<\frac{1}{N}S^{N/2}$, for all $\mu\geq\mu_{0}$.
\end{proof}

In analogous way to the proof of Theorem~\ref{paper1A}, we split the proof into two cases.

\noindent\textbf{Case 1} $(u_{0},v_{0})\neq(0,0)$.

This case is completely similar to the proof of the subcritical case.

\noindent\textbf{Case 2} $(u_{0},v_{0})=(0,0)$.

Let $\mu_{0}>0$ be the parameter obtained in the Lemma~\ref{paper1mu}. We claim that if $\mu\geq\mu_{0}$, then there exists a sequence $(y_{n})_{n}\subset\mathbb{R}^{N}$ and constants $R,\xi>0$ such that
\begin{equation}\label{paper1j500}
	\liminf_{n\rightarrow\infty}\int_{B_{R}(y_{n})}(u_{n}^{2}+v_{n}^{2})\;\mathrm{d}x\geq\xi>0.
\end{equation}
In fact, suppose that \eqref{paper1j500} does not hold. Thus, for any $R>0$ we have
$$
\lim_{n\rightarrow\infty}\sup_{y\in\mathbb{R}^{N}}\int_{B_{R}(y)}(u_{n}^{2}+v_{n}^{2})\;\mathrm{d}x=0.
$$
It follows from Lemma~\ref{paper1lions} that $u_{n}\rightarrow0$ strongly in $L^{p}(\mathbb{R}^{N})$, for $2<p<2^{*}$. Notice that
$$
I_{\mu,\mathrm{o}}(u_{n},v_{n})-\frac{1}{2}\langle I_{\mu,\mathrm{o}}^{\prime}(u_{n},v_{n}),(u_{n},v_{n})\rangle=\frac{p-2}{2p}\mu\|u_{n}\|_{p}^{p}+\frac{1}{N}\|v_{n}\|_{2^{*}}^{2^{*}},
$$
which together with \eqref{paper1j6} and Lemma~\ref{paper1lions} implies that
$$
Nc_{\mathcal{N}_{\mu,\mathrm{o}}}+o_{n}(1)=N\left(I_{\mu,\mathrm{o}}(u_{n},v_{n})-\frac{1}{2}\langle I_{\mu,\mathrm{o}}^{\prime}(u_{n},v_{n}),(u_{n},v_{n})\rangle-\frac{p-2}{2p}\mu\|u_{n}\|_{p}^{p}\right)=\|v_{n}\|_{2^{*}}^{2^{*}}.
$$
Moreover, we can deduce that
$$
Nc_{\mathcal{N}_{\mu,\mathrm{o}}}+o_{n}(1)=\|v_{n}\|_{2^{*}}^{2^{*}}+\mu\|u_{n}\|_{p}^{p}+\langle I_{\mu,\mathrm{o}}^{\prime}(u_{n},v_{n}),(u_{n},v_{n})\rangle=\|(u_{n},v_{n})\|_{E_{\mathrm{o}}}^{2}-2\int_{\mathbb{R}^{N}}\lambda_{\mathrm{o}}(x)u_{n}v_{n}\;\mathrm{d}x.
$$
The preceding computations implies that 
$$
Nc_{\mathcal{N}_{\mu,\mathrm{o}}}+o_{n}(1)=\|v_{n}\|_{2^{*}}^{2^{*}}\leq S^{-\frac{N}{N-2}}\|\nabla v_{n}\|_{2}^{\frac{2N}{N-2}}\leq S^{-\frac{N}{N-2}}\left(\|(u_{n},v_{n})\|_{E_{\mathrm{o}}}^{2}-2\int_{\mathbb{R}^{N}}\lambda_{\mathrm{o}}(x)u_{n}v_{n}\;\mathrm{d}x\right)^{\frac{N}{N-2}}.
$$
Thus, we can conclude that
$$
Nc_{\mathcal{N}_{\mu,\mathrm{o}}}+o_{n}(1)\leq \left(\frac{Nc_{\mathcal{N}_{\mu,\mathrm{o}}}}{S}\right)^{\frac{N}{N-2}}+o_{n}(1).
$$
Therefore, $c_{\mathcal{N}_{\mu,\mathrm{o}}}\geq \frac{1}{N}S^{N/2}$, contradicting Lemma~\ref{paper1mu}. 

Since \eqref{paper1j500} holds, we can consider the shift sequence $(\tilde{u}_{n}(x),\tilde{v}_{n}(x))=(u_{n}(x+y_{n}),v_{n}(x+y_{n}))$ and we can repeat the same arguments used in the proof of Theorem~\ref{paper1A} to finish the proof.

\begin{remark}
	Let us set
	$
	\Lambda:=\{\mu>0:\mbox{\eqref{paper1j0}} \ \mbox{has ground state}\}.
	$
	We have proved in Theorem~\ref{paper1B} that $\Lambda$ is nonempty. Naturally arise the following questions: $\tilde{\mu}:=\inf\Lambda>0$? $\Lambda$ is an interval? Can we use the approach to study the existence of ground states for the system of the form:
	\begin{equation}\label{ss}
		\left\{
		\begin{array}{lr}
			-\Delta u+V_{1,\mathrm{o}}(x)u=|u|^{p-2}u+\lambda(x)v, & \quad x\in\mathbb{R}^{N},\\
			-\Delta v+V_{2,\mathrm{o}}(x)v=\mu|v|^{2^{*}-2}v+\lambda(x)u,    & x\in\mathbb{R}^{N}.
		\end{array}
		\right. \tag{$S_{\mu}$}
	\end{equation} 
	Does System~\eqref{ss} possesses ground state solution for any $\mu>0$?
\end{remark}


\section{Proof of Theorem \ref{paper1AB}}\label{paper1s6}

In this section we will be concerned with the existence of ground states for the asymptotically periodic case. We emphasize that the only difference between the potentials $V_{i,\mathrm{o}}(x),\lambda_{\mathrm{o}}(x)$ and $V_{i}(x),\lambda(x)$ is the periodicity required to $V_{i,\mathrm{o}}(x)$ and $\lambda_{\mathrm{o}}(x)$. Thus, if $V_{i}(x)$ and $\lambda(x)$ are periodic potentials, we can make use of Theorems \ref{paper1A} and \ref{paper1B} to get a ground state solution for System~\eqref{paper1jap}. Let us suppose that they are not periodic. 

Associated to System~\eqref{paper1jap}, we have the following energy functional 
$$
I_{\mu}(u,v)=\frac{1}{2}\left(\|(u,v)\|_{E}^{2}-2\int_{\mathbb{R}^{N}}\lambda(x)uv\;\mathrm{d}x\right)-\frac{\mu}{p}\|u\|_{p}^{p}-\frac{1}{q}\|v\|_{q}^{q}.
$$

\noindent The Nehari manifold associated to System~\eqref{paper1jap} is defined by
$$
\mathcal{N}_{\mu}=\{(u,v)\in E\backslash\{(0,0)\}:\langle I_{\mu}^{\prime}(u,v),(u,v)\rangle=0\},
$$
and the Nehari energy level is given by $c_{\mathcal{N}_{\mu}}=\inf_{\mathcal{N}_{\mu}} I_{\mu}(u,v)$. Arguing as before, we deduce that
$$   
I_{\mu}(u,v)\geq\left(\frac{1}{2}-\frac{1}{p}\right)(1-\delta)\|(u,v)\|_{E}^{2}\geq \left(\frac{1}{2}-\frac{1}{p}\right)(1-\delta)\alpha>0, \quad \mbox{for all} \hspace{0,2cm} (u,v)\in\mathcal{N}_{\mu}.
$$ 
Hence, $c_{\mathcal{N}_{\mu}}>0$. The next step is to establish a relation between the energy levels $c_{\mathcal{N}_{\mu,\mathrm{o}}}$ and $c_{\mathcal{N}_{\mu}}$. 

\begin{lemma}\label{paper1est}
	$c_{\mathcal{N}_{\mu}}<c_{\mathcal{N}_{\mu,\mathrm{o}}}$.
\end{lemma}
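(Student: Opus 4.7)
The plan is to use a nonnegative ground state of the periodic problem as a test pair for the asymptotically periodic functional. By Theorem~\ref{paper1A} in the subcritical case (resp.\ by Theorem~\ref{paper1B} when $q=2^{*}$ and $\mu\geq\mu_{0}$) we have a nonnegative ground state $(u_{0},v_{0})\in\mathcal{N}_{\mu,\mathrm{o}}$ with $I_{\mu,\mathrm{o}}(u_{0},v_{0})=c_{\mathcal{N}_{\mu,\mathrm{o}}}$, and by the definition of Nehari manifold $(u_{0},v_{0})\ne(0,0)$, so at least one of $u_{0}$, $v_{0}$ is not identically zero. By Lemma~\ref{paper1p1} (which, by the remark at the end of Section~\ref{paper1s1}, remains valid with $\lambda, V_{i}$ in place of $\lambda_{\mathrm{o}}, V_{i,\mathrm{o}}$), there exists a unique $t_{\mu}>0$ such that $(t_{\mu}u_{0},t_{\mu}v_{0})\in\mathcal{N}_{\mu}$.

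A direct computation gives, for every $t>0$,
\begin{align*}
I_{\mu}(tu_{0},tv_{0})-I_{\mu,\mathrm{o}}(tu_{0},tv_{0})&=\frac{t^{2}}{2}\int_{\mathbb{R}^{N}}\bigl[(V_{1}-V_{1,\mathrm{o}})u_{0}^{2}+(V_{2}-V_{2,\mathrm{o}})v_{0}^{2}\bigr]\,\mathrm{d}x\\
&\quad-t^{2}\int_{\mathbb{R}^{N}}(\lambda-\lambda_{\mathrm{o}})u_{0}v_{0}\,\mathrm{d}x,
\end{align*}
since the nonlinear and kinetic terms cancel. Assumption \ref{paper1A6} provides the strict pointwise inequalities $V_{i}-V_{i,\mathrm{o}}<0$ and $\lambda-\lambda_{\mathrm{o}}>0$, and combined with $u_{0},v_{0}\ge 0$, every term on the right hand side is $\le 0$. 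Because at least one component of $(u_{0},v_{0})$ is nontrivial, at least one of the weighted $L^{2}$ integrals $\int(V_{1,\mathrm{o}}-V_{1})u_{0}^{2}\,\mathrm{d}x$ or $\int(V_{2,\mathrm{o}}-V_{2})v_{0}^{2}\,\mathrm{d}x$ is strictly positive, and hence the whole expression is strictly negative.

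Combining this with the maximality property of Lemma~\ref{paper1p1} applied to $I_{\mu,\mathrm{o}}$ (whose unique maximizer on the ray $\{s(u_{0},v_{0}):s\ge 0\}$ is attained at $s=1$, since $(u_{0},v_{0})\in\mathcal{N}_{\mu,\mathrm{o}}$), we obtain
$$
c_{\mathcal{N}_{\mu}}\le I_{\mu}(t_{\mu}u_{0},t_{\mu}v_{0})<I_{\mu,\mathrm{o}}(t_{\mu}u_{0},t_{\mu}v_{0})\le\max_{s\ge 0}I_{\mu,\mathrm{o}}(su_{0},sv_{0})=I_{\mu,\mathrm{o}}(u_{0},v_{0})=c_{\mathcal{N}_{\mu,\mathrm{o}}},
$$
which is the claim. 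The only delicate point, and the step where care is needed, is ensuring that the middle strict inequality really holds: this relies both on the strict pointwise gap in \ref{paper1A6} and on the guaranteed nontriviality of a nonnegative ground state for the periodic system; once these are in hand, the comparison above is automatic.
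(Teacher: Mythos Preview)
Your proof is correct and follows essentially the same approach as the paper: take the nonnegative periodic ground state $(u_{0},v_{0})$, project it onto $\mathcal{N}_{\mu}$ via Lemma~\ref{paper1p1}, use \ref{paper1A6} to get the strict inequality $I_{\mu}(t_{\mu}u_{0},t_{\mu}v_{0})<I_{\mu,\mathrm{o}}(t_{\mu}u_{0},t_{\mu}v_{0})$, and then chain with the maximality on the ray. Your justification of the strict sign (via nontriviality of at least one component and the strict pointwise gap $V_{i}<V_{i,\mathrm{o}}$) is in fact slightly more explicit than the paper's, which simply asserts the integral inequality directly from \ref{paper1A6}.
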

\begin{proof}
	Let $(u_{0},v_{0})\in \mathcal{N}_{\mu,\mathrm{o}}$ be the nonnegative ground state solution for System~\eqref{paper1j0}. It is easy to see that Lemma~\ref{paper1p1} works for $ I_{\mu}$ and $\mathcal{N}_{\mu}$. Thus, there exists a unique $t_{\mu}>0$, depending on $\mu$ and $(u_{0},v_{0})$, such that $(t_{\mu}u_{0},t_{\mu}v_{0})\in\mathcal{N}_{\mu}$. By using \ref{paper1A6} we get
	\begin{align*} 
		\int_{\mathbb{R}^{N}}\left[(V_{1}(x)-V_{1,\mathrm{o}}(x))u_{0}^{2}+(V_{2}(x)-V_{2,\mathrm{o}}(x))v_{0}^{2}+(\lambda_{\mathrm{o}}(x)-\lambda(x))u_{0}v_{0}\right]\mathrm{d}x<0.
	\end{align*}
	Therefore, $ I_{\mu}(t_{\mu}u_{0},t_{\mu}v_{0})-I_{\mu,\mathrm{o}}(t_{\mu}u_{0},t_{\mu}v_{0})<0$. Since $(u_{0},v_{0})$ is a ground state for System~\eqref{paper1j0} we can use Lemma~\ref{paper1p1} to deduce that 
	$$
	c_{\mathcal{N}_{\mu}}\leq I_{\mu}(t_{\mu}u_{0},t_{\mu}v_{0})<I_{\mu,\mathrm{o}}(t_{\mu}u_{0},t_{\mu}v_{0})\leq\max_{t\geq0}I_{\mu,\mathrm{o}}(tu_{0},tv_{0})=I_{\mu,\mathrm{o}}(u_{0},v_{0})=c_{\mathcal{N}_{\mu,\mathrm{o}}},
	$$   
	which finishes the proof.
\end{proof}     

Let $(u_{n},v_{n})_{n}\subset\mathcal{N}_{\mu}$ be the minimizing sequence satisfying
\begin{equation}\label{paper1paper4jj21}
	I_{\mu}(u_{n},v_{n})\rightarrow c_{\mathcal{N}_{\mu}} \quad \mbox{and} \quad   I_{\mu}^{\prime}(u_{n},v_{n})\rightarrow0.
\end{equation}    	
Since $(u_{n},v_{n})_{n}$ is a bounded sequence in $E$, we may assume up to a subsequence that $(u_{n},v_{n})\rightharpoonup(u_{0},v_{0})$ weakly in $E$. The main difficulty here is to prove that the weak limit is nontrivial. 

\begin{proposition}\label{paper1paper4p3}
	The weak limit $(u_{0},v_{0})$ of the minimizing sequence $(u_{n},v_{n})_{n}$ is nontrivial.
\end{proposition}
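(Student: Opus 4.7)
The plan is to argue by contradiction: assume $(u_{0},v_{0})=(0,0)$ and derive a contradiction with Lemma~\ref{paper1est}, which asserts $c_{\mathcal{N}_{\mu}}<c_{\mathcal{N}_{\mu,\mathrm{o}}}$. The guiding intuition is that when the weak limit vanishes, $(u_{n},v_{n})$ escapes to infinity (or spreads out), and where it lives the asymptotically periodic potentials are indistinguishable from their periodic counterparts; $(u_n,v_n)$ should therefore behave, in the limit, like a minimizing sequence for the periodic problem, which would yield the reverse inequality $c_{\mathcal{N}_{\mu,\mathrm{o}}}\leq c_{\mathcal{N}_{\mu}}$.

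The first technical ingredient is a vanishing lemma: if $f\in C(\mathbb{R}^{N})$ is bounded with $f(x)\to 0$ as $|x|\to\infty$, and $(w_{n})\subset H^{1}(\mathbb{R}^{N})$ is bounded with $w_{n}\rightharpoonup 0$, then $\int_{\mathbb{R}^{N}}f\,w_{n}^{2}\,\mathrm{d}x\to 0$ (split over $B_{R}(0)$ and its complement, use Rellich--Kondrachov on the ball, then let $R\to\infty$; the cross-term $\int(\lambda-\lambda_{\mathrm{o}})u_{n}v_{n}$ is handled similarly via $2|u_{n}v_{n}|\leq u_{n}^{2}+v_{n}^{2}$). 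The embedding $E_{i}\hookrightarrow H^{1}(\mathbb{R}^{N})$, guaranteed by $\nu_{i}>0$ in \ref{paper1A7}, places the components of $(u_{n},v_{n})$ in this framework. Applying the lemma to the differences $V_{i,\mathrm{o}}-V_{i}$ and $\lambda-\lambda_{\mathrm{o}}$ (continuous, bounded, and vanishing at infinity by \ref{paper1A6}) gives
\begin{align*}
\|(u_{n},v_{n})\|_{E_{\mathrm{o}}}^{2}-\|(u_{n},v_{n})\|_{E}^{2}&\to 0,\\
\int_{\mathbb{R}^{N}}(\lambda(x)-\lambda_{\mathrm{o}}(x))u_{n}v_{n}\,\mathrm{d}x&\to 0,
\end{align*}
whence also $I_{\mu}(u_{n},v_{n})-I_{\mu,\mathrm{o}}(u_{n},v_{n})\to 0$ and $\langle I_{\mu}'(u_{n},v_{n})-I_{\mu,\mathrm{o}}'(u_{n},v_{n}),(u_{n},v_{n})\rangle\to 0$.

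The second step is to project onto $\mathcal{N}_{\mu,\mathrm{o}}$: by (the periodic analogue of) Lemma~\ref{paper1p1} there is a unique $t_{n}>0$ with $(t_{n}u_{n},t_{n}v_{n})\in\mathcal{N}_{\mu,\mathrm{o}}$, which is legitimate since $\|(u_{n},v_{n})\|_{E}\geq\alpha>0$ by Lemma~\ref{paper1nehari}. Comparing the two Nehari identities and invoking the vanishing step produces
\[
(t_{n}^{p-2}-1)\mu\|u_{n}\|_{p}^{p}+(t_{n}^{q-2}-1)\|v_{n}\|_{q}^{q}=o(1).
\]
Since $\mu\|u_{n}\|_{p}^{p}+\|v_{n}\|_{q}^{q}$ is bounded (Sobolev) and bounded below by a positive constant (Lemmas~\ref{paper1lemma1} and~\ref{paper1nehari}), a sign analysis forces $t_{n}\to 1$ along a subsequence. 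Then $t_{n}\to 1$ together with the boundedness of $(u_{n},v_{n})$ yields $I_{\mu,\mathrm{o}}(t_{n}u_{n},t_{n}v_{n})-I_{\mu,\mathrm{o}}(u_{n},v_{n})\to 0$, and combining with the first step gives
\[
c_{\mathcal{N}_{\mu,\mathrm{o}}}\leq I_{\mu,\mathrm{o}}(t_{n}u_{n},t_{n}v_{n})=I_{\mu}(u_{n},v_{n})+o(1)\to c_{\mathcal{N}_{\mu}},
\]
contradicting Lemma~\ref{paper1est}. The only delicate part of the argument is the vanishing step, which depends crucially on the uniform decay in \ref{paper1A6} and on $L^{2}_{\mathrm{loc}}$ compactness; once that is in place, everything reduces to careful bookkeeping around the scaling $t_{n}$.
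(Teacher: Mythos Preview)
Your proof is correct and shares the paper's overall architecture: argue by contradiction, use the decay in \ref{paper1A6} together with $L^{2}_{\mathrm{loc}}$ compactness to replace $I_{\mu}$ by $I_{\mu,\mathrm{o}}$ modulo $o(1)$, project onto $\mathcal{N}_{\mu,\mathrm{o}}$ via Lemma~\ref{paper1p1}, show $t_{n}\to 1$, and conclude $c_{\mathcal{N}_{\mu,\mathrm{o}}}\leq c_{\mathcal{N}_{\mu}}$, contradicting Lemma~\ref{paper1est}.

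The genuine difference lies in how $t_{n}\to 1$ is established. You exploit that $(t_{n}^{p-2}-1)$ and $(t_{n}^{q-2}-1)$ always have the same sign, so the relation $(t_{n}^{p-2}-1)\mu\|u_{n}\|_{p}^{p}+(t_{n}^{q-2}-1)\|v_{n}\|_{q}^{q}=o(1)$ combined with the uniform lower bound $\mu\|u_{n}\|_{p}^{p}+\|v_{n}\|_{q}^{q}\geq(1-\delta)\alpha^{2}$ (from the Nehari identity and Lemma~\ref{paper1nehari}) forces $t_{n}\to 1$ directly. The paper instead proceeds by two separate claims: to show $\limsup t_{n}\leq 1$ it invokes Lions' lemma, translates by a sequence $(y_{n})$ to produce a nontrivial weak limit $(\tilde u,\tilde v)$, and uses weak lower semicontinuity; to show $t_{n}\geq 1$ eventually it compares the Nehari energies directly. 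Your route is more elementary at this point---no concentration-compactness, no translations---and in the critical case $q=2^{*}$ it does not appeal, within this step, to the nonvanishing estimate that in the paper requires the restriction $\mu\geq\mu_{0}$. The paper's approach reuses machinery already developed for Theorems~\ref{paper1A} and \ref{paper1B}, which explains the choice, but your sign argument is the cleaner way to handle this particular proposition.
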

\begin{proof}
	We suppose by contradiction that $(u_{0},v_{0})=(0,0)$. We may assume that 
	\begin{itemize}
		\item $u_{n}\rightarrow 0$ and $v_{n}\rightarrow 0$ strongly in $L^{p}_{loc}(\mathbb{R}^{N})$, for all $2\leq p<2^{*}$;
		\item $u_{n}(x)\rightarrow 0$ and $v_{n}(x)\rightarrow 0$ almost everywhere in $\mathbb{R}^{N}$.
	\end{itemize}       
	It follows from assumption \ref{paper1A6} that for any $\varepsilon>0$ there exists $R>0$ such that
	\begin{equation}\label{paper1paper4jj2}
		|V_{1,\mathrm{o}}(x)-V_{1}(x)|<\varepsilon, \quad |V_{2,\mathrm{o}}(x)-V_{2}(x)|<\varepsilon, \quad |\lambda(x)-\lambda_{\mathrm{o}}(x)|<\varepsilon, \quad \mbox{for} \hspace{0,2cm} |x|\geq R.
	\end{equation}
	By using \eqref{paper1paper4jj2} and the local convergence, for any $\varepsilon>0$, there exists $n_{0}\in\mathbb{N}$ such that 
	$$
	\left|\int_{\mathbb{R}^{N}}(V_{1,\mathrm{o}}(x)-V_{1}(x))u_{n}^{2}\;\mathrm{d}x\right| \leq (\|V_{1}\|_{L^{\infty}(B_{R}(0))}+\|V_{1,\mathrm{o}}\|_{L^{\infty}(B_{R}(0))})\varepsilon + C\varepsilon,
	$$                                              
	$$
	\left|\int_{\mathbb{R}^{N}}(V_{2,\mathrm{o}}(x)-V_{2}(x))v_{n}^{2}\;\mathrm{d}x\right|\leq (\|V_{2,\mathrm{o}}\|_{L^{\infty}(B_{R}(0))}+\|V_{2}\|_{L^{\infty}(B_{R}(0))})\varepsilon + C\varepsilon,
	$$
	$$
	\hspace{-0,6cm}	\left|\int_{\mathbb{R}^{N}}(\lambda(x)-\lambda_{\mathrm{o}}(x))u_{n}v_{n}\;\mathrm{d}x\right| \leq (\|\lambda\|_{L^{\infty}(B_{R}(0))}+\|\lambda_{\mathrm{o}}\|_{L^{\infty}_{loc}(B_{R}(0))})\varepsilon + C\varepsilon,
	$$
	for all $n\geq\tilde{n_{0}}$. Therefore, we can conclude that
	$$
	I_{\mu,\mathrm{o}}(u_{n},v_{n})- I_{\mu}(u_{n},v_{n})=o_{n}(1) \quad \mbox{and} \quad \langle I_{\mu,\mathrm{o}}^{\prime}(u_{n},v_{n}),(u_{n},v_{n})\rangle-\langle I_{\mu}^{\prime}(u_{n},v_{n}),(u_{n},v_{n})\rangle=o_{n}(1),
	$$
	which jointly with \eqref{paper1paper4jj21} implies that
	\begin{equation}\label{paper1paper4jj22}
		I_{\mu,\mathrm{o}}(u_{n},v_{n})=c_{\mathcal{N}_{\mu}}+o_{n}(1) \quad \mbox{and} \quad \langle I_{\mu,\mathrm{o}}^{\prime}(u_{n},v_{n}),(u_{n},v_{n})\rangle=o_{n}(1).
	\end{equation}
	By using Lemma~\ref{paper1p1} we obtain a sequence $(t_{n})_{n}\subset(0,+\infty)$ such that $(t_{n}u_{n},t_{n}v_{n})_{n}\subset\mathcal{N}_{\mu,\mathrm{o}}$.
	
	\vspace{0,3cm}
	
	\noindent\textit{Claim 1.} $\limsup_{n\rightarrow+\infty}t_{n}\leq 1$.
	
	\vspace{0,3cm}
	
	Arguing by contradiction, we suppose that there exists $\varepsilon_{0}>0$ such that, up to a subsequence, we have $t_{n}\geq1+\varepsilon_{0}$, for all $n\in\mathbb{N}$. Thus, using \eqref{paper1paper4jj22} and the fact that $(t_{n}u_{n},t_{n}v_{n})\subset\mathcal{N}_{\mu,\mathrm{o}}$ we get
	$$
	(t_{n}^{p-2}-1)\mu\|u_{n}\|_{p}^{p}+(t_{n}^{q-2}-1)\|v_{n}\|_{q}^{q}=o_{n}(1),
	$$
	which together with $t_{n}\geq1+\varepsilon_{0}$ implies that
	\begin{equation}\label{paper1jj24}
		((1+\varepsilon_{0})^{p-2}-1)\mu\|u_{n}\|_{p}^{p}+((1+\varepsilon_{0})^{q-2}-1)\|v_{n}\|_{q}^{q}\leq o_{n}(1).
	\end{equation}
	Similarly to the proof of Theorems~\ref{paper1A} and \ref{paper1B}, we define $(\tilde{u}_{n}(x),\tilde{v}_{n}(x))=(u_{n}(x+y_{n}),v_{n}(x+y_{n}))$. It follows from assumption \ref{paper1A6} that $V_{1},V_{2}\in L^{\infty}(\mathbb{R}^{N})$. Using the continuous embedding $E_{i}\hookrightarrow H^{1}(\mathbb{R}^{N})$ we can deduce that $(\tilde{u}_{n},\tilde{v}_{n})_{n}$ is bounded in $E$. Thus, up to a subsequence, we may consider $(\tilde{u}_{n},\tilde{v}_{n})\rightharpoonup(\tilde{u},\tilde{v})$ weakly in $E$. Therefore,
	\begin{equation}\label{paper1j111} 
		\lim_{n\rightarrow+\infty}\int_{B_{R}(0)}(\tilde{u}_{n}^{2}+\tilde{v}_{n}^{2})\;\mathrm{d}x=\lim_{n\rightarrow+\infty}\int_{B_{R}(y_{n})}(u_{n}^{2}+v_{n}^{2})\;\mathrm{d}x\geq\beta>0,
	\end{equation}
	which implies $(\tilde{u},\tilde{v})\neq(0,0)$. We point out that in the critical case, when $q=2^{*}$, \eqref{paper1j111} holds for parameters $\mu\geq\mu_{0}$, where $\mu_{0}$ was introduced in Lemma~\ref{paper1mu}. Thus, by using \eqref{paper1jj24} and the semicontinuity of the norm, we get
	$$
	0<((1+\varepsilon_{0})^{p-2}-1)\mu\|\tilde{u}\|_{p}^{p}+((1+\varepsilon_{0})^{q-2}-1)\|\tilde{v}\|_{q}^{q}\leq o_{n}(1),
	$$
	which is not possible and finishes the proof of \textit{Claim 1.}
	
	\vspace{0,3cm}
	
	\noindent\textit{Claim 2.} \textit{There exists $n_{0}\in\mathbb{N}$ such that $t_{n}\geq1$, for $n\geq n_{0}$.}
	
	\vspace{0,3cm}
	
	In fact, arguing by contradiction, we suppose that up to a subsequence, $t_{n}<1$. Since $(t_{n}u_{n},t_{n}v_{n})_{n}\subset\mathcal{N}_{\mu,\mathrm{o}}$ we have that
	$$
	c_{\mathcal{N}_{\mu,\mathrm{o}}} \leq \frac{p-2}{2p}\mu t_{n}^{p}\|u_{n}\|_{p}^{p}+\frac{q-2}{2q}t_{n}^{q}\|v\|_{q}^{q}\leq \frac{p-2}{2p}\mu \|u_{n}\|_{p}^{p}+\frac{q-2}{2q}\|v\|_{q}^{q}=c_{\mathcal{N}_{\mu}}+o_{n}(1).
	$$
	Therefore, $c_{\mathcal{N}_{\mu,\mathrm{o}}}\leq c_{\mathcal{N}_{\mu}}$ which contradicts Lemma~\ref{paper1est} and finishes the proof of \textit{Claim 2.}
	
	\vspace{0,3cm}
	
	Combining \textit{Claims} $1$ and $2$ we deduce that 
	$$
	I_{\mu,\mathrm{o}}(t_{n}u_{n},t_{n}v_{n})-I_{\mu,\mathrm{o}}(u_{n},v_{n})=o_{n}(1).
	$$
	Thus, it follows from \eqref{paper1paper4jj22} that
	$$
	c_{\mathcal{N}_{\mu,\mathrm{o}}}\leq I_{\mu,\mathrm{o}}(t_{n}u_{n},t_{n}v_{n})=I_{\mu,\mathrm{o}}(u_{n},v_{n})+o_{n}(1)=c_{\mathcal{N}_{\mu}}+o_{n}(1),
	$$
	which contradicts Lemma~\ref{paper1est}. Therefore, $(u_{0},v_{0})\neq(0,0)$.
\end{proof}

\begin{proof}[Proof of Theorem~\ref{paper1AB} completed]
	Since $(u_{0},v_{0})$ is a nontrivial point of the energy functional $ I$, it follows that $(u_{0},v_{0})\in\mathcal{N}_{\mu}$. Therefore, we have $c_{\mathcal{N}_{\mu}}\leq  I_{\mu}(u_{0},v_{0})$. On the other hand, using the semicontinuity of the norm we deduce that	
	\begin{eqnarray*}
		c_{\mathcal{N}_{\mu}}+o_{n}(1) & = & \left(\frac{1}{2}-\frac{1}{p}\right)\mu\|u_{n}\|_{p}^{p}+\left(\frac{1}{2}-\frac{1}{q}\right)\|v_{n}\|_{q}^{q}\\
		& \geq & \left(\frac{1}{2}-\frac{1}{p}\right)\mu\|u_{0}\|_{p}^{p}+\left(\frac{1}{2}-\frac{1}{q}\right)\|v_{0}\|_{q}^{q}+o_{n}(1)\\
		& = &   I_{\mu}(u_{0},v_{0})+o_{n}(1).
	\end{eqnarray*}  
	Hence, $c_{\mathcal{N}_{\mu}}\geq  I_{\mu}(u_{0},v_{0})$. Therefore $ I_{\mu}(u_{0},v_{0})=c_{\mathcal{N}_{\mu}}$. Repeating the same argument used in the proof of Theorem~\ref{paper1A}, we can deduce that there exists $t_{\mu}>0$ such that $(t_{\mu}|u_{0}|,t_{\mu}|v_{0}|)\in\mathcal{N}_{\mu}$ is a positive ground state solution for System~\eqref{paper1jap} which finishes the proof of Theorem~\ref{paper1AB}.    
\end{proof}


\section{Proof of Theorem \ref{paper1C}}\label{paper1s4}

In this section we deal of the following coupled system
\begin{equation}\label{paper1j000}
	\left\{
	\begin{array}{lr}
		-\Delta u+V_{1}(x)u=\mu|u|^{2^{*}-2}u+\lambda(x)v, & \quad x\in\mathbb{R}^{N},\\
		-\Delta v+V_{2}(x)v=|v|^{2^{*}-2}v+\lambda(x)u,    & x\in\mathbb{R}^{N}.
	\end{array}
	\right.
\end{equation}
In order to obtain a nonexistence result we prove the following Pohozaev identity.

\begin{lemma}\label{paper1poho} Suppose $N\geq3$ and let $(u,v)\in E$ be a classical solution of \eqref{paper1j000}. Then, $(u,v)$ satisfies the following Pohozaev identity:
	\begin{align*}\label{paper1po1}
		\int_{\mathbb{R}^{N}}\left(|\nabla u|^{2}+|\nabla v|^{2}\right)\; \mathrm{d}x = \int_{\mathbb{R}^{N}}\left(\mu|u|^{2^{*}}+|v|^{2^{*}}+2^{*}\lambda(x)uv\right)\; \mathrm{d}x+\frac{2}{N-2}\int_{\mathbb{R}^{N}}\langle\nabla\lambda(x),x\rangle uv\; \mathrm{d}x\\-\frac{2^{*}}{2}\int_{\mathbb{R}^{N}}\left(V_{1}(x)u^{2}+V_{2}(x)v^{2}\right)\; \mathrm{d}x 
		-\frac{1}{N-2}\int_{\mathbb{R}^{N}}\left(\langle\nabla V_{1}(x),x\rangle u^{2}+\langle\nabla V_{2}(x),x\rangle v^{2}\right)\; \mathrm{d}x.
	\end{align*}
\end{lemma}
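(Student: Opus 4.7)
The plan is to establish the identity by the classical Pohozaev argument: multiply the first equation by the Pohozaev multiplier $\langle \nabla u,x\rangle$ and the second by $\langle \nabla v,x\rangle$, integrate over $B_R$, perform integration by parts, pass to the limit $R\to\infty$ along a subsequence $R_n\to\infty$ for which the boundary terms vanish (using that $(u,v)\in E$ is a classical solution, so elliptic regularity gives enough decay for $|u|,|v|,|\nabla u|,|\nabla v|$ to ensure the standard annular Fubini argument works), then add the two identities and rearrange.

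The key ingredients are the following integration-by-parts formulas, valid on $\mathbb{R}^N$ modulo boundary contributions that disappear in the limit:
\begin{align*}
\int_{\mathbb{R}^N}(-\Delta w)\langle \nabla w,x\rangle\,\mathrm{d}x &= -\tfrac{N-2}{2}\int_{\mathbb{R}^N}|\nabla w|^2\,\mathrm{d}x, \\
\int_{\mathbb{R}^N} V_i(x)\, w\,\langle \nabla w,x\rangle\,\mathrm{d}x &= -\tfrac{N}{2}\int_{\mathbb{R}^N} V_i(x) w^2\,\mathrm{d}x-\tfrac{1}{2}\int_{\mathbb{R}^N}\langle \nabla V_i(x),x\rangle w^2\,\mathrm{d}x, \\
\int_{\mathbb{R}^N}|w|^{2^*-2}w\,\langle \nabla w,x\rangle\,\mathrm{d}x &= -\tfrac{N}{2^*}\int_{\mathbb{R}^N}|w|^{2^*}\,\mathrm{d}x,
\end{align*}
obtained by writing $w\langle \nabla w,x\rangle=\tfrac{1}{2}\langle \nabla(w^2),x\rangle$, $|w|^{2^*-2}w\,\langle \nabla w,x\rangle=\tfrac{1}{2^*}\langle \nabla |w|^{2^*},x\rangle$, and applying $\int \langle \nabla h,x\rangle\,\mathrm{d}x=-N\int h\,\mathrm{d}x$.

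The delicate step is the coupling term, and it is the one that forces us to test the two equations simultaneously rather than handle them separately. The trick is to use
\[
\lambda(x)v\langle \nabla u,x\rangle+\lambda(x)u\langle \nabla v,x\rangle=\lambda(x)\langle \nabla(uv),x\rangle,
\]
so that integration by parts gives
\[
\int_{\mathbb{R}^N}\lambda(x)\bigl(v\langle \nabla u,x\rangle+u\langle \nabla v,x\rangle\bigr)\,\mathrm{d}x=-N\int_{\mathbb{R}^N}\lambda(x)uv\,\mathrm{d}x-\int_{\mathbb{R}^N}\langle \nabla\lambda(x),x\rangle uv\,\mathrm{d}x.
\]
Summing the two tested equations and inserting all these formulas yields
\[
-\tfrac{N-2}{2}\!\!\int(|\nabla u|^2+|\nabla v|^2)-\tfrac{N}{2}\!\!\int (V_1 u^2+V_2 v^2)-\tfrac{1}{2}\!\!\int(\langle \nabla V_1,x\rangle u^2+\langle \nabla V_2,x\rangle v^2)+\tfrac{N}{2^*}\!\!\int(\mu|u|^{2^*}+|v|^{2^*})+N\!\!\int \lambda uv+\!\!\int \langle \nabla\lambda,x\rangle uv=0.
\]

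Finally, multiplying by $-\tfrac{2}{N-2}$ and using the arithmetic identities $\tfrac{N}{N-2}=\tfrac{2^*}{2}$, $\tfrac{2N}{N-2}=2^*$, and $\tfrac{2N}{(N-2)2^*}=1$ produces exactly the claimed identity. The main obstacle is justifying the vanishing of boundary integrals on $\partial B_{R_n}$; this is handled by choosing $R_n\to\infty$ so that $\int_{\partial B_{R_n}}(|\nabla u|^2+|\nabla v|^2+V_1 u^2+V_2 v^2+|\lambda|uv+|u|^{2^*}+|v|^{2^*})\,\mathrm{d}S\to 0$ with sufficient decay to absorb the extra $|x|$ factor carried by the Pohozaev multiplier, which is possible by Fubini applied to the $L^1(\mathbb{R}^N)$ densities coming from the energy bounds and the classical decay estimates for solutions of this coupled elliptic system.
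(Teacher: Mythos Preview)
Your argument is correct and follows the same overall Pohozaev strategy as the paper: multiply the two equations by $\langle\nabla u,x\rangle$ and $\langle\nabla v,x\rangle$, integrate by parts, combine the coupling terms via $\lambda v\langle\nabla u,x\rangle+\lambda u\langle\nabla v,x\rangle=\lambda\langle\nabla(uv),x\rangle$, and rearrange using $\tfrac{N}{N-2}=\tfrac{2^*}{2}$. The algebraic computation you wrote matches the paper's identity exactly.

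The only methodological difference is in how the ``boundary at infinity'' is handled. The paper does not integrate over balls $B_R$ and select a good subsequence $R_n$; instead it introduces a smooth cut-off $\psi_n(x)=\psi(|x|^2/n^2)$, multiplies by $\langle\nabla u,x\rangle\psi_n$ and $\langle\nabla v,x\rangle\psi_n$, and passes to the limit $n\to\infty$ by dominated convergence (the extra terms involving $\nabla\psi_n$ are pointwise bounded by $C|\nabla u|^2$, $CV_i u^2$, etc., and vanish pointwise). Your route via sharp balls plus the $\liminf$ argument (if $g\ge 0$ and $\int_1^\infty g(R)\,\mathrm{d}R<\infty$ then $\liminf_{R\to\infty} R\,g(R)=0$, applied to $g(R)=\int_{\partial B_R}(\cdots)\,\mathrm{d}S$) is equally valid and perhaps more elementary, but requires keeping track of all boundary contributions simultaneously so that a single sequence $R_n$ works. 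The cut-off approach trades that bookkeeping for a dominated convergence argument; otherwise the two proofs are essentially the same.
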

\begin{proof}
	In order to get this Pohozaev identity we adapt some ideas from \cite[Theorem B.3]{will}. Let $(u,v)\in E$ be a classical solution of the system \eqref{paper1j000} and let us denote
	$$
	f(x,u,v)=-V_{1}(x)u+\mu|u|^{2^{*}-2}u+\lambda(x)v \quad \mbox{and} \quad g(x,u,v)=-V_{2}(x)v+|v|^{2^{*}-2}v+\lambda(x)u.
	$$
	We consider the cut-off function $\psi\in C^{\infty}_{0}(\mathbb{R})$ defined by $\psi(t)=1$ if $|t|\leq 1$, $\psi(t)=0$ if $|t|\geq 2$ and $|\psi^{\prime}(t)|\leq C$, for some $C>0$. We define $\psi_{n}(x)=\psi\left(|x|^{2}/n^{2}\right)$ and we note that
	$$
	\nabla\psi_{n}(x)=\frac{2}{n^{2}}\psi^{\prime}\left(\frac{|x|^{2}}{n^{2}}\right)x.
	$$
	Multiplying the first equation in \eqref{paper1j000} by the factor $\langle\nabla u,x\rangle\psi_{n}$, the second equation by the factor $\langle\nabla v,x\rangle\psi_{n}$, summing and integrating we get
	\begin{equation}\label{paper1pohoz}
		\int_{\mathbb{R}^{N}}(\Delta u\langle\nabla u,x\rangle+\Delta v\langle\nabla v,x\rangle)\psi_{n}\;\mathrm{d}x=\int_{\mathbb{R}^{N}}(f(x,u,v)\langle\nabla u,x\rangle+g(x,u,v)\langle\nabla v,x\rangle)\psi_{n}\;\mathrm{d}x
	\end{equation}
	The idea is to take the limit as $n\rightarrow+\infty$ in \eqref{paper1pohoz}. In order to calculate the limit in the left-hand side of \eqref{paper1pohoz}, we note that
	\begin{equation}\label{paper1p6}
		\langle\nabla u,x\rangle\psi_{n}\Delta u = \mbox{div}(\psi_{n}H(x,u))+\frac{N-2}{2}\psi_{n}|\nabla u|^{2}+\frac{|\nabla u|^{2}}{2}\langle\nabla\psi_{n},x\rangle-\langle\nabla u,x\rangle\langle\nabla\psi_{n},\nabla u\rangle,
	\end{equation}
	where
	$
	H(x,u)=\langle\nabla u,x\rangle\nabla u-(|\nabla u|^{2}/2)x.
	$
	Therefore, integrating \eqref{paper1p6} and using Lebesgue dominated convergence theorem, we conclude that
	\begin{equation}\label{paper1p7}
		-\lim_{n\rightarrow\infty}\int_{\mathbb{R}^{N}}\langle\nabla u,x\rangle\psi_{n}\Delta u \;\mathrm{d}x=-\frac{N-2}{2}\int_{\mathbb{R}^{N}}|\nabla u|^{2}\; \mathrm{d}x.
	\end{equation}
	Analogously, we can deduce the limit 
	\begin{equation}\label{paper1p10}
		-\lim_{n\rightarrow\infty}\int_{\mathbb{R}^{N}}\langle\nabla v,x\rangle\psi_{n}\Delta v \;\mathrm{d}x=-\frac{N-2}{2}\int_{\mathbb{R}^{N}}|\nabla v|^{2}\; \mathrm{d}x.
	\end{equation}
	In order to calculate the right-hand side, we note that
	$$
	\mbox{div}\left(\psi_{n}F(x,u,v)x\right) =\psi_{n}\langle\nabla F(x,u,v),x\rangle+F(x,u,v)\langle\nabla\psi_{n},x\rangle+N\psi_{n}F(x,u,v),
	$$
	where
	$
	F(x,u,v)=-(1/2)V_{1}(x)u^{2}+(\mu/2^{*})|u|^{2^{*}}+\lambda(x)uv.
	$
	Hence, we can deduce that
	\begin{align*}\label{paper1p8}
		\int_{\mathbb{R}^{N}}f(x,u,v)\langle\nabla u,x\rangle\psi_{n}\; \mathrm{d}x = \int_{\mathbb{R}^{N}}\left(\mbox{div}(\psi_{n}F(x,u,v)x)-F(x,u,v)\langle\nabla\psi_{n},x\rangle\psi_{n}\right)\; \mathrm{d}x\nonumber\\ +\int_{\mathbb{R}^{N}}\left(\frac{1}{2}\langle\nabla V_{1}(x),u\rangle u^{2}-NF(x,u,v)\psi_{n}-\langle\nabla\lambda(x),x\rangle uv-\langle\lambda(x)u\nabla v,x\rangle\right)\psi_{n}\; \mathrm{d}x.
	\end{align*}
	Analogously, denoting
	$
	G(x,u,v)=-\frac{1}{2}V_{2}(x)v^{2}+\frac{1}{2^{*}}|v|^{2^{*}}+\lambda(x)uv,
	$
	we can deduce that
	\begin{align*}
		\int_{\mathbb{R}^{N}}g(x,u,v)\langle\nabla v,x\rangle\psi_{n}\; \mathrm{d}x = \int_{\mathbb{R}^{N}}\left(\mbox{div}(\psi_{n}G(x,u,v)x)-G(x,u,v)\langle\nabla\psi_{n},x\rangle\psi_{n}\right)\; \mathrm{d}x\nonumber\\  +\int_{\mathbb{R}^{N}}\left(\frac{1}{2}\langle\nabla V_{2}(x),v\rangle v^{2}-NG(x,u,v)\psi_{n}-\langle\nabla\lambda(x),x\rangle uv-\langle\lambda(x)v\nabla u,x\rangle\right)\psi_{n}\; \mathrm{d}x.
	\end{align*}
	By using integration by parts we have that	
	$$
	-\int_{\mathbb{R}^{N}}\lambda(x)\langle u\nabla v+v\nabla u,x\rangle\psi_{n}\; \mathrm{d}x =\int_{B_{2n}(0)}\left(\langle\nabla\psi_{n},x\rangle\lambda(x)uv+\langle\nabla\lambda(x),x\rangle\psi_{n}uv+N\psi_{n}\lambda(x)uv\right)\; \mathrm{d}x,
	$$
	which implies that
	$$
	\lim_{n\rightarrow\infty}\int_{\mathbb{R}^{N}}\lambda(x)\langle u\nabla v+v\nabla u,x\rangle\psi_{n}\; \mathrm{d}x=-\int_{\mathbb{R}^{N}}\langle\nabla\lambda(x),x\rangle uv\; \mathrm{d}x-N\int_{\mathbb{R}^{N}}\lambda(x)uv\; \mathrm{d}x.
	$$
	Therefore, using the Lebesgue dominated convergence theorem in the same way as we used when we calculate the left-hand side, we obtain
	\begin{eqnarray*}
		\lim_{n\rightarrow\infty}\int_{\mathbb{R}^{N}}\left(f(x,u,v)\langle\nabla u,x\rangle+g(x,u,v)\langle\nabla v,x\rangle\right)\psi_{n}\; \mathrm{d}x & = & -N\int_{\mathbb{R}^{N}}\left(F(x,u,v)+G(x,u,v)\right)\; \mathrm{d}x +\nonumber\\ & & \hspace{-9,0cm}+\frac{1}{2}\int_{\mathbb{R}^{N}}\left(\langle\nabla V_{1}(x),x\rangle u^{2}+\langle\nabla V_{2}(x),x\rangle v^{2}\right)\; \mathrm{d}x-\int_{\mathbb{R}^{N}}\langle\nabla\lambda(x),x\rangle uv\; \mathrm{d}x+N\int_{\mathbb{R}^{N}}\lambda(x)uv \; \mathrm{d}x.
	\end{eqnarray*} 
	Replacing $F(x,u,v)$ and $G(x,u,v)$ in the equation above, we get the right-hand side of \eqref{paper1pohoz} which finishes the proof.
\end{proof}

\begin{proof}[Proof of Theorem~\ref{paper1C} completed]
	Let $(u,v)\in E$ be a positive classical solution of \eqref{paper1j000}. By the definition of weak solution we obtain
	\begin{equation}\label{paper1p14}
		\int_{\mathbb{R}^{N}}\left(|\nabla u|^{2}+V_{1}(x)u^{2}+|\nabla v|^{2}+V_{2}(x)v^{2}\right)\;\mathrm{d}x=\int_{\mathbb{R}^{N}}\left(\mu|u|^{2^{*}}+|v|^{2^{*}}+2\lambda(x)uv\right)\;\mathrm{d}x.
	\end{equation}
	Combining \eqref{paper1p14} with the Pohozaev identity obtained in Lemma~\ref{paper1poho}, we have 
	\begin{align}\label{paper1p15}
		0 = \left(1-\frac{2^{*}}{2}\right)\int_{\mathbb{R}^{N}}\left(V_{1}(x)u^{2}+V_{2}(x)v^{2}-2\lambda(x)uv\right)\; \mathrm{d}x+\frac{2}{N-2}\int_{\mathbb{R}^{N}}\langle\nabla\lambda(x),x\rangle uv\; \mathrm{d}x\nonumber\\ -\frac{1}{N-2}\int_{\mathbb{R}^{N}}\left(\langle\nabla V_{1}(x),x\rangle u^{2}+\langle\nabla V_{2}(x),x\rangle v^{2}\right)\; \mathrm{d}x.
	\end{align}
	Multiplying \eqref{paper1p15} by the factor $-(N-2)/2$, we get
	\begin{align*}
		\int_{\mathbb{R}^{N}}\left(V_{1}(x)u^{2}+V_{2}(x)v^{2}-2\lambda(x)uv\right)\; \mathrm{d}x =\int_{\mathbb{R}^{N}}\langle\nabla\lambda(x),x\rangle uv\; \mathrm{d}x\nonumber\\-\frac{1}{2}\int_{\mathbb{R}^{N}}\left(\langle\nabla V_{1}(x),x\rangle u^{2}+\langle\nabla V_{2}(x),x\rangle v^{2}\right)\; \mathrm{d}x.
	\end{align*}
	Thus, it follows from assumptions \ref{paper1A4} and \ref{paper1A5} that
	$$
	\int_{\mathbb{R}^{N}}\left(V_{1}(x)u^{2}+V_{2}(x)v^{2}-2\lambda(x)uv\right)\; \mathrm{d}x\leq0.
	$$
	On the other hand, by assumption \ref{paper1A3} we get
	$$
	\int_{\mathbb{R}^{N}}\left(V_{1}(x)u^{2}+V_{2}(x)v^{2}-2\lambda(x)uv\right)\; \mathrm{d}x\geq0.
	$$
	Thus, we conclude that
	$$
	\int_{\mathbb{R}^{N}}\left(V_{1}(x)u^{2}+V_{2}(x)v^{2}-2\lambda(x)uv\right)\; \mathrm{d}x=0.
	$$
	Therefore, we finally deduce that
	\begin{eqnarray*}
		0 & \leq & \int_{\mathbb{R}^{N}}\left(V_{1}(x)u^{2}-2\sqrt{V_{1}(x)V_{2}(x)}uv+V_{2}(x)v^{2}\right)\;\mathrm{d}x\\
		& \leq & \int_{\mathbb{R}^{N}}\left(V_{1}(x)u^{2}+V_{2}(x)v^{2}-\frac{2}{\delta}\lambda(x)uv\right)\;\mathrm{d}x\\
		& < & \int_{\mathbb{R}^{N}}\left(V_{1}(x)u^{2}+V_{2}(x)v^{2}-2\lambda(x)uv\right)\;\mathrm{d}x=0,
	\end{eqnarray*}
	which is a contradiction and this finishes the proof of Theorem \ref{paper1C}.
\end{proof}

{\bf Acknowledgements.} The authors would like to express their sincere gratitude to the referee for carefully reading the manuscript and valuable comments and suggestions.









\nocite{*} 

%


\end{document}